\newtheorem{theorem}{Theorem}[section]
\newtheorem{lemma}[theorem]{Lemma}
\newtheorem{proposition}[theorem]{Proposition}
\newtheorem{corollary}[theorem]{Corollary}
\theoremstyle{definition}
\newtheorem{example}[theorem]{Example}
\newtheorem{notation}[theorem]{Notation}
\theoremstyle{remark}
\newtheorem{remark}[theorem]{Remark}
\newcommand{\sO}{{\cal O}}
\renewcommand{\P}{{\mathbb P}}
\DeclareMathOperator{\im}{im}
\DeclareMathOperator{\coker}{coker}
\DeclareMathOperator{\ann}{ann}
\DeclareMathOperator{\Proj}{Proj}
\DeclareMathOperator{\rank}{rank}
\def\ZZ{{\mathbb Z}}
\def\QQ{{\mathbb Q}}
\def\CC{{\mathbb C}}
\def\PP{{\mathbb P}}
\def\AA{{\mathbb A}}
\def\PP{{\mathbb P}}
\def\PP{{\mathbb P}}
\def\FF{{\mathbb F}}
\def\sU{{\cal U}}
\def\sT{{\cal T}}
\def\sL{{\cal L}}
\def\sC{{\cal C}}
\DeclareMathOperator{\betti}{{\rm betti}}
\def\Mac2{{\emph{Macaulay2}}}
\def\canmod{X_{can}}
\DeclareMathOperator{\Tors}{Tors}
\def\Ddots{\mathinner{\mkern1mu\raise\p@
\vbox{\kern7\p@\hbox{.}}\mkern2mu
\raise4\p@\hbox{.}\mkern2mu\raise7\p@\hbox{.}\mkern1mu}}
\DeclareMathOperator{\Pic}{Pic}
\DeclareMathOperator{\Sym}{Sym}
\def\Ddots{\mathinner{\mkern1mu\raise\p@
\vbox{\kern7\p@\hbox{.}}\mkern2mu
\raise4\p@\hbox{.}\mkern2mu\raise7\p@\hbox{.}\mkern1mu}}
\newdimen\x \x=12pt
\date{}
\title{Marked Godeaux surfaces with special bicanonical fibers}
\author{Frank-Olaf Schreyer and Isabel Stenger}
\begin{document}

\maketitle

\begin{abstract}

In this paper we study marked numerical Godeaux surfaces with special bicanonical fibers. Based on the construction method of marked Godeaux surfaces in \cite{SS20} we give a complete characterization for the existence of hyperelliptic bicanonical fibers and torsion fibers. Moreover, we describe how the families of Reid and Miyaoka with torsion $\ZZ/3\ZZ$ and $\ZZ/5\ZZ$  arise in our homological setting.
\end{abstract}
\tableofcontents
\section*{Introduction}
In \cite{SS20} we presented a new construction method for numerical Godeaux surfaces based on homological algebra. Thereby, we first restricted to the case where
the bicanonical system $|2K_X|$  of a Godeaux surface $X$ has no fixed part and four distinct base points. Fixing an enumeration of the base points $p_0,\ldots,p_3$, we introduced the notion of a \textit{marked numerical Godeaux surface}. Note that any such surface has a torsion group of odd order by  \cite[Lemma 1.9]{SS20}. Thus, by the results of Miyaoka this paper  restricts to surfaces with a torsion group $T = \ZZ/5\ZZ$, $T = \ZZ/3\ZZ$ or trivial $T$. In \cite{SS20} we 
showed that constructing the canonical ring $R(X)$ of a marked numerical Godeaux surface $X$ is basically equivalent to performing two steps:
\begin{description}
	\item[Construction step 1] Choose a line $\ell$ in a complete intersection $Q = V(q_0,\ldots,q_3) \subset \PP^{11}$, where each $q_i$ is the Pfaffian of a $4\times 4$ skew-symmetric matrix $M_i$. 
	\item[Construction step 2] Solve a system of linear equations (depending on $\ell$) using syzygies.
\end{description}
We have shown that for a general line $\ell \subset Q$, we obtain a four dimensional linear solution space in the second step \cite[Theorem 5.1]{SS20}, hence we obtain a $\PP^3$- bundle over an open subset of the Fano variety of lines $F_1(Q)$ which is 8-dimensional. In our setting, a general line in $Q$ is a line which does not intersect special subvarieties of $Q$ of codimension $\geq 2$ which will be introduced later.  Together with a 3-dimensional group operation of the torus $({\CC^*})^3$ on $F_1(Q)$ we obtain in total an 8-dimensional family. Showing that the simply connected Barlow surface (see \cite{Barlow}) is deformation equivalent to an element of this family, gives then the existence of an 8-dimensional family of marked numerical Godeaux surfaces which are simply connected. We call this family the \textit{dominant family} as its corresponding lines in $Q$ dominate $F_1(Q)$.

Blowing up the four base points of $|2K_X|$ we obtain a fibration $\tilde{X} \rightarrow \P^1$, where $\tilde{X}$ denotes the blow-up. We call the fibers as well as their strict transforms the \textit{bicanonical fibers}. 
In this paper, we study marked numerical Godeaux surfaces with special bicanonical fibers.

For a Godeaux surface with torsion group $T = \ZZ/5\ZZ$ or $T = \ZZ/3\ZZ$ there exist special reducible bicanonical fibers. Indeed, if $\tau_i \in \Tors X \subset \Pic(X)$ is non-trivial, then there exists a unique effective divisor $D_i \in |K_X + \tau_i|$ and $D_i + D_{n-i}$ is a reducible fiber in $|2K_X|$, where $n$ denotes the order of the torsion group and $\tau_{n-i} = - \tau_i$. We call such a fiber a \textit{torsion fiber} and give a complete characterization for the existence of these fibers in terms of our homological setting in Theorem \ref{thm_torsionfib}. Moreover, we show that there is a 3-dimensional subset $V_{tors} \subset Q$ which is the union of three $\PP^3$s which lines in $Q$ have to intersect to obtain torsion fibers. For $T = \ZZ/5\ZZ$ and $T = \ZZ/3\ZZ$, as the dimension of the variety of suitable lines in $Q$ drops, the dimension of the solution space in the second step increases  and we reconstruct the 8-dimensional families of these surfaces known by Miyaoka and Reid. 

We show that there is a 5-dimensional variety $V_{hyp} \subset Q \subset \PP^{11}$ such that a bicanonical fiber over a point $q \in \ell \cong \PP^1 \cong |2K_X|$ is hyperelliptic if and only if $q \in V_{hyp}$ (see Theorem \ref{thm_hyplocusminors}). Furthermore, in Theorem \ref{hypLocus} we prove that $V_{hyp}$  is unirational. We prove that there is a 7-dimensional family of Godeaux surfaces with precisely one hyperelliptic fiber, and a 6-dimensional family with two hyperelliptic fibers. Note that the only other known examples of Godeaux surfaces with hyperelliptic bicanonical fibers were the Barlow surfaces (see \cite{Barlow}) and the surfaces introduced by Craighero and Gattazzo (see \cite{CraigheroGattazzo}).

 To obtain a complete classification of all marked Godeaux surfaces, we have to determine the loci of all lines in $Q$ at which the linear solution space in the second Construction step is so big that we obtain another component. 
For the time being, we failed to give a complete characterization of all numerical Godeaux surfaces due to the existence of ghost components. These ghost components arise from other components of the space of all solutions of our deformation problem. We were not able to determine the general stratification of the Fano scheme of lines $F_1(Q)$  with regard to the dimension of the solution space in the second Construction step.\medskip

{\bf Acknowledgments.}  This work was supported by the Deutsche Forschungsgemeinschaft (DFG, German Research Foundation) - Project-ID 286237555 - TRR 195. We thank Wolfram Decker and Miles Reid for inspiring conversations. Our work makes essential use of $\Mac2$ (\cite{M2}).

\section{Preliminaries}\label{sec_prelim}
Throughout this paper, we use the following notation. 
\begin{itemize}
	\item $X$ denotes a numerical Godeaux surface;
	\item $\pi\colon X \rightarrow \canmod = \Proj(R(X))$ denotes the morphism to the canonical model;
	\item $K_X$ and $K_{\canmod}$ denote canonical divisors;
	\item $\Tors X$  denotes the torsion subgroup of the Picard group of $X$;
	\item $\Bbbk$ denotes the ground field.
\end{itemize}
We are mainly interested in the case $\Bbbk = \CC$. For computations we also use $\Bbbk = \QQ$ or number fields and for our experiments with Macaulay2, $\Bbbk$ can also be a finite field which we often may regard as a specialization of a number field.
We will now briefly summarize the main results of \cite{SS20} which will be needed in the following.

Let $x_0,x_1$ be a basis of the bicanonical system $|2K_X|$, and let  $y_0,\ldots,y_3$ be a basis of the tricanonical system $|3K_X|$. We consider the graded polynomial ring 
$$S =  \Bbbk[x_0,x_1,y_0,y_1,y_2,y_3]$$ with $\deg(x_i)= 2$ and $\deg(y_j)=3$. The canonical ring $R(X)$ is a finitely generated $S$-module and we showed that
there exists a minimal free resolution of $R(X)$ as an $S$-module of type
\begin{equation}\label{bettinumbers}
0 \leftarrow R(X) \leftarrow F_0 \xleftarrow{d_1} F_1 \xleftarrow{d_2} F_1^\ast(-17) \xleftarrow{d_1^t} F_0^\ast(-17) \leftarrow 0,
\end{equation} 
where $F_0 = S \oplus S(-4)^4 \oplus S(-5)^3$, $F_1 = S(-6)^6 \oplus S(-7)^{12} \oplus S(-8)^8$ and $d_2$ is alternating (see \cite[Theorem 1.5 and Corollary 5.6]{Stenger19}).

In \cite{SS20} Section 3, we give a complete description of the maps of the complex $F_\cdot$ modulo the exact sequence $x_0,x_1$ and obtain the other entries using an unfolding technique. More precisely, we make an Ansatz for $d_1$ and $d_2$ with new variables (which are a priori unknown expressions in the $x_i$ and $y_j$) and evaluate the condition $d_1d_2 = 0$. Then, finding a solution for the system of equations is equivalent to performing the two steps described in the Introduction. 

Under a certain additional condition,  the module $R := \coker d_1$ carries a ring structure. If this is the case and $\Proj R$ has additionally only Du Val singularities, then $R$ is the canonical ring of a (marked) numerical Godeaux surface $X$ (see \cite[Theorem 5.02]{Stenger18}).  Unfortunately, these conditions are only sufficient. Thus, there are solutions of our deformation problem leading to surfaces which might not be Godeaux surfaces but form families of dimension $\geq 8$.  We call these components of the solution space \textit{Ghost components}. Perhaps these are stable Godeaux surfaces in the sense of \cite{FranParRoll}.

 We briefly recall the general notation for the unfolding parameters for $d_1$ and $d_2$ from \cite{SS20}.
\begin{notation}[The general set-up]\label{not_gensetup}
	We write 
	\begingroup
	\renewcommand*{\arraystretch}{1.1}
	\begin{align*}
		d_1& =   \begin{array}{c|c|c|c}
			& 6S(-6) & 12S(-7) & 8S(-8) \\ \hline
			S &  \color{red} b_0(y)  \color{blue} + \ast&  \color{blue} \ast &  \color{blue} \ast \\ \hline
			4S(-4) & \color{blue} a  & \color{red} b_1(y) & \color{blue} c \\ \hline
			3S(-5) & \color{black} 0 & \color{blue} e & \color{red}b_2(y)
		\end{array} \\ \\
		d_2& = \begin{array}{c|c|c|c}
			& 6S(-11) & 12S(-10) & 8S(-9) \\ \hline
			6S(-6) & \color{blue} o & \color{blue} n & \color{red}b_3(y) \\ \hline
			12S(-7) & \color{blue} -n^t  & \color{red}{\color{red} b_4(y)} & \color{blue} p \\ \hline
			8S(-8) & \color{red}-b_3(y)^t & \color{blue} -p^t &
			\color{black} 0
		\end{array} 
	\end{align*}
	\endgroup
with skew-symmetric matrices $o$ and $b_4$. The entries of the red matrices are known \cite[Proposition 3.6]{SS20} and the entries of the blue matrices are variables of the corresponding degree. For example, the $a$-variables have all degree 2 and a possible assignment is hence a linear combination  of $x_0,x_1$. 
\end{notation}
\begin{remark}
Let $d_1'$ be the matrix obtained from $d_1$ by erasing the first row. Note that is enough to solve the system $d_1'd_2 = 0 $ with $d_1'$ and $d_2$ as in Notation \ref{not_gensetup} since we can recover the missing first row of $d_1$ from the skew-symmetric $d_2$ by a syyzy computation. 
\end{remark}
Evaluating the relations coming from $d_1'd_2 = 0 $, we have seen in \cite{SS20} that any $p$- and $e$-variable can be expressed by some  $a$-variable and that there are a priori only 12 non-zero $a$-variables and an $a$-matrix of the form
\begin{center}
	\begingroup
	\color{blue}
	$a =  \left(\begin{array}{cccccc}
	a_{0,1}&
	a_{0,2}&
	0&
	a_{0,3}&
	0&
	0  \\
	a_{1,0}&
	0&
	a_{1,2}&
	0&
	a_{1,3}&
	0 \\
	0&
	a_{2,0}&
	a_{2,1}&
	0 &
	0&
	a_{2,3}\\
	0&
	0&
	0&
	a_{3,0}&
	a_{3,1}&
	a_{3,2} \end{array}\right)$\endgroup. \end{center}
As a possible entry of the matrix $a$ is a linear combination of $x_0,x_1$ with coefficients in $\Bbbk$,  we will think of  these coefficients as Stiefel coordinates, hence as the entries of $2\times 12$-matrices having at least one non-vanishing maximal minor. Moreover, these Stiefel coordinates define a line $\ell$ in the $\PP^{11}$ of $a$-variables. 
Again, from evaluating $d_1'd_2 =0$, we see that the 12 $a$-variables must satisfy exactly four quadratic equations
\begin{align*}
q_0 &=  a_{1,2} a_{1,3}\color{black} -a_{2,1} a_{2,3}+a_{3,1} a_{3,2}, \\[8pt]
q_1 &=  a_{0,2} a_{0,3}
-a_{3,0} a_{3,2}
+a_{2,0} a_{2,3}, \\[8pt]
q_2 &= a_{1,0} a_{1,3} -   a_{0,1} a_{0,3}
+a_{3,0} a_{3,1}, \\[8pt]
q_3 &= a_{0,1} a_{0,2}-a_{1,0} a_{1,2}+a_{2,0} a_{2,1}.
\end{align*}
The $q_{i}$ are  Pfaffians of  skew-symmetric matrices $M_0,\ldots,M_3$ of size 4. For a possible choice of matrices $M_i$ we refer to \cite[Section 3]{SS20}. 
The corresponding variety $Q = V(q_0,\ldots,q_3) \subset \P^{11}$  is an irreducible complete intersection. 

After the choice of a parametrized line $\ell \subset Q$, hence the choice of $a$, $e$ and $p$, 
the second step of our construction consists in solving the remaining equations. In \cite{SS20} we have seen that there are exactly 42 relations which are linear in the unknown $o$-variables and $c$-variables. We represent these equations in a matrix 
 $$
 \left(\begin{array}{c| c}
 	0  & l_1 \\ \hline l_2 & q
 \end{array}\right) 
 \left(\begin{array}{c}
	$\underline{c}$ \\  $\underline{o}$
\end{array}\right) = 0
$$
 where $l_1$ is a $12 \times 12$-matrix and $l_2$ is a $30 \times 20$-matrix, both having entries linear in the $a$-variables, and $q$ is a 
 $30 \times 12$-matrix with quadratic entries. Note that $l_1$ is a direct sum of 4 skew-symmetric matrices of size 3 whose entries correspond to the rows of the $a$-matrix.
 \begin{notation}
 	We denote  the $42\times 32$-matrix by $m_a$ and 
the standardly graded polynomial ring of the 12 remaining $a$-variables by $S_a$. The quotient ring $S_a/I(Q)$ is denoted by $S_Q$. 
 \end{notation}
Resolving $l_1$ and $l_1^t$ (respectively $l_2$ and $l_2^t$) over the quotient ring $S_Q$ we get a complex $C_1$ (respectively $C_2)$. Note that, rather surprisingly, $\ker(l_2 \otimes S_Q)$ and $\ker(l_2^t \otimes S_Q)$ are both free over $S_Q$. 
Moreover, $\ker(m_{a}^{t}\otimes S_{Q}) \cong 4 S_{Q}(-3)\oplus 12 S_{Q}(-4)$ is free, however $E=\ker(m_{a}\otimes S_{Q})$ is not free. Note that $E$ describes the solution space of our deformation problem. Combining these two complexes, we get a commutative diagram with three split exact rows and generically exact columns:
\begin{center}
\begin{tikzpicture}\label{C-complex}
	\matrix(m)[matrix of math nodes,
		row sep=1.9em, column sep=2.9em,
		text height=1.5ex, text depth=0.25ex]
		{ & 0 & 0 & 0& \\
	       0 &2S_Q(-1) & E & 4S_Q& \\
	       0&20S_Q(2) & 20S_Q(2) \oplus 12S_Q(1) & 12S_Q(1) & 0\\
	       0&30S_Q(3) & 12S_Q(2) \oplus 30S_Q(3) & 12S_Q(2) &0\\
	       0&12S_Q(4) & 4S_Q(3) \oplus 12S_Q(4) & 4S_Q(3)&0\\
	       &0 & 0 & 0 &\\};
	\path[->]
	(m-2-1) edge (m-2-2)
	(m-3-1) edge (m-3-2)
	(m-4-1) edge (m-4-2)
	(m-5-1) edge (m-5-2)
	(m-3-4) edge (m-3-5)
	(m-4-4) edge (m-4-5)
	(m-5-4) edge (m-5-5)
	(m-1-2) edge (m-2-2)
	(m-2-2) edge (m-3-2)
	        edge (m-2-3)
	(m-3-2) edge node[right] {$l_2$} (m-4-2)
	        edge (m-3-3)
	(m-4-2) edge[red] node[right] {$g_2$} (m-5-2)
	        edge[red] (m-4-3)
    (m-5-2) edge (m-6-2)
            edge (m-5-3)
    (m-1-3) edge (m-2-3)
    (m-2-3) edge (m-3-3)
            edge (m-2-4)
   	(m-3-3) edge[red]  node[right]{$m_a$} (m-4-3)
   	(m-4-3) edge (m-5-3)
   	        edge (m-4-4)
    (m-5-3) edge (m-6-3)
            edge (m-5-4)
    (m-1-4) edge (m-2-4)
    (m-2-4) edge[red](m-3-4)
    (m-3-4) edge node[right] {$l_1$} (m-4-4)
     (m-3-3)  edge[red]  (m-3-4)
    (m-4-4) edge node[right] {$g_1$}  (m-5-4)
    (m-5-4) edge (m-6-4);
  	\end{tikzpicture} 
\end{center}
We will show computationally that the support of the homology groups $H_{i}(C_{1})$ and $H_{i}(C_{2})$
 have codimension $\ge 2$ in $Q$. Thus a general line $\ell\subset Q$ does not intersect this locus. If $\ell$ is such a line then the proof of  \cite[Theorem 5.1]{SS20} shows that there is an exact sequence of global sections on $\PP^1 \cong \ell$ 
  \begin{equation}
 0 \rightarrow H^0(\sO_{\PP^1}(-1)^2) \rightarrow H^0(E|_{\ell}) \rightarrow H^0(\sO_{\PP^1}^4) \rightarrow 0
 \end{equation}
 and we obtain a 4-dimensional linear solution space in the second step of the construction.

 In the following we report our results on the homology loci of the complexes introduced above using \Mac2. 
The complex $C_1$ has just homology at the zeroth position and we compute that the three  entries of each row of the $a$-matrix form a regular sequence. More precisely, 
$H_0(C_1) = \coker g_1$ is supported at a 4-dimensional scheme  in $Q$ which decomposes in the union of  eight irreducible components. Indeed, setting the entries of one row of the $a$-matrix to zero, the corresponding locus in $Q$ decomposes into a union of two varieties with Betti tables
\begin{equation*} 
\begin{matrix}
	&0&1\\\text{0:}&1&3\\\text{1:}&\text{.}&4\\\text{2:}&\text{.}&1\\\end{matrix}
	 \ \quad \quad \text{ and \quad \quad }
	\begin{matrix}	&0&1\\\text{total:}&1&7\\\text{0:}&1&6\\\text{1:}&\text{.}&1\\
	& & \end{matrix}.
\end{equation*}
For example, restricted to the components corresponding to the entries of the first row, the $a$-matrix is of the form
\begin{equation*}
\begin{pmatrix}
0&0&0&0&0&0\\
a_{1,0}&0&a_{1,2}&0&a_{1,3}&0\\
0&a_{2,0}&a_{2,1}&0&0&a_{2,3}\\
0&0&0&a_{3,0}&{a}_{3,1,3}&a_{3,2}\end{pmatrix} \quad \hbox{or}\quad  
\begin{pmatrix} 
0&0&0&0&0&0\\
0&0&a_{1,2}&0&a_{1,3}&0\\
0&0&a_{2,1}&0&0&a_{2,3}\\
0&0&0&0&a_{3,1}&a_{3,2}\end{pmatrix}.
\end{equation*}

For the second complex $C_2$ we have $H_2(C_2) = H_3(C_2) = 0$ by construction. The module $H_1(C_2) = \ker g_2 / \im l_2$ is supported at a  5-dimensional scheme of degree $72$ in $Q$.   There are exactly 24 irreducible components all of codimension 6 in $\PP^{11}$ with only two different Betti tables
\begin{equation*} 
\begin{matrix}
 &0&1\\\text{0:}&1&5\\\text{1:}&\text{.}&1\\
 \end{matrix}
 \quad \quad \text{ and } \quad 
  \begin{matrix}
 &0&1\\\text{0:}&1&4\\\text{1:}&\text{.}&2\\\end{matrix}
\end{equation*}
and  exactly 12 components for each Betti table. 
We checked that all components with the same Betti table are equivalent under the $S_4$-operation on the coefficients described in \cite[Proposition 4.4]{SS20} and give only one example for each class here:
\begin{align*}
& \left(a_{0,3},a_{1,3},a_{2,3},a_{3,1},a_{3,2},a_{2,1}a_{2,0}-a_{1,2}a_{1,0}+a_{0,2}a_{0,1}
\right),
\\ 
& \left(a_{0,3},a_{1,2},a_{2,3},a_{3,2},a_{2,1}a_{2,0}+a_{0,2}a_{0,1},
a_{3,1}a_{3,0}+a_{1,3}a_{1,0} \right).
\end{align*}

The module $H_0(C_2) = \coker g_2$ is supported at a 5-dimensional scheme of degree $72$ in $Q$. By inspecting this locus, we see that its minimal primes are exactly the minimal primes of the  $4\times 4$-minors of the $a$-matrix in $Q$ and decomposes  into 5 components with Betti numbers 
$$\hbox{ 4 of type }\quad \begin{matrix}
      &0&1\\\text{0:}&1&6\\\text{1:}&\text{.}&1\cr \cr \\\end{matrix} \quad \hbox{and dimension 4, and the fifth of type } \quad
\begin{matrix}
      &0&1\\\text{0:}&1&\text{.}\\\text{1:}&\text{.}&4\\\text{2
      :}&\text{.}&4\\\text{3:}&\text{.}&4\\\end{matrix}.$$
 In particular we deduce from these computations that the homology loci of $C_{1}$ and $C_{2}$ have codimension $\ge 2$ in $Q$. 
 
 To end this section, we will briefly explain a method how to construct lines in $Q$ meeting no or some special loci different from the one explained in \cite[Section 4]{SS20}. The idea is the following: first we choose a point $p \in Q$. Then, for a general $p \in Q$, the variety $Z = Q \cap T_pQ$ is a cone over a surface. Afterwards we choose a (general) point $q$ in $Z$ different from $p$.  Now, as $Q$ is a complete intersection of quadrics, the line $\ell = \overline{pq}$ is completely contained in $Q$. Proceeding like this, the constructed line $\ell$ does not meet any of the codimension $\geq 2$ subloci of $Q$ introduced above. If we want to construct lines meeting one or two loci, we simply choose the two spanning points in these loci (if possible).

\section{Special bicanonical fibers}\label{special bi-canonical fibers}
In this section we characterize special bicanonical divisors of a marked Godeaux surface $X$. More precisely, we give equivalent conditions for the existence of hyperelliptic (respectively torsion) fibers in terms of some submatrices of the first syzygy matrix $d_1$ of $R(X)$.   Recall the following result of Catanese and Pignatelli on bicanonical divisors:
 \begin{lemma}[\cite{CatanesePignatelli}, Lemma 1.10]\label{lem_bicanchar}
 	Let $X$ be a numerical Godeaux surface and let 
  $C \in |2K_{\canmod}|$. Then one of the following holds:
 \begin{enumerate}[(i)]
 \item $C$ is embedded by $\omega_C$ and $\phi_3(C) = \phi_{\omega_C}(C)$ is the complete intersection of a quadric and a cubic.
 \item $C$ is honestly hyperelliptic and $\phi_3(C) = \phi_{\omega_C}(C)$ is a  twisted cubic curve.
 
 \item $\pi^{*}C=D_{1}+D_{2}$ with $D_{i}\in |K_{X} +\tau_{i}|$, $\tau_{i} \in \Tors X$ nontrivial, $\tau_{1}+\tau_{2}=0$.
 \end{enumerate}
  Case $(i)$ is the general one. 
 \end{lemma}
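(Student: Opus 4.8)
The statement to prove is a citation from Catanese--Pignatelli, so the plan is to reconstruct their classification of the bicanonical curves $C \in |2K_{\canmod}|$ rather than to establish something genuinely new. The underlying geometry is that for a numerical Godeaux surface the canonical model $\canmod$ has $K_{\canmod}^2 = 1$, $p_g = 0$, $q = 0$, so that by Riemann--Roch and Kodaira vanishing $h^0(2K_{\canmod}) = 2$ and the general member $C$ is a smooth curve of genus $g = 1 + \tfrac12 C\cdot(C+K_{\canmod}) = 4$ by adjunction, with $\omega_C = \sO_C(3K_{\canmod})$ of degree $C\cdot 3K_{\canmod} = 6 = 2g-2$. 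The whole trichotomy is then just the standard geometry of canonical and tricanonical maps of a curve of genus $4$.

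The plan is to analyze $C$ through its canonical embedding. First I would note that a non-hyperelliptic smooth curve of genus $4$ is canonically embedded in $\PP^3$ as the complete intersection of a quadric and a cubic, which is exactly case $(i)$; here one must identify the canonical system $\omega_C$ with the restriction of $\phi_3$, i.e. check that $\phi_{\omega_C}$ and the tricanonical restriction $\phi_3|_C$ agree, using $\omega_C \cong \sO_C(3K_{\canmod})$ from adjunction on $\canmod$. Second, if $C$ is smooth and \emph{hyperelliptic}, the canonical map is two-to-one onto a rational normal curve, and the $g^1_2$ together with the degree count forces $\phi_{\omega_C}(C)$ to be a twisted cubic, giving case $(ii)$; the word ``honestly'' signals that one restricts to the genuinely hyperelliptic situation as opposed to a degenerate limit. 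Third, I would treat the reducible and non-reduced possibilities: if $C$ is not $2$-connected, then since $2K_{\canmod}$ pulls back to a bicanonical divisor on $X$, a failure of connectivity corresponds via $\pi$ to a splitting $\pi^*C = D_1 + D_2$ with each $D_i$ an effective divisor in $|K_X + \tau_i|$ for non-trivial torsion classes summing to zero --- this is exactly the torsion-fiber case $(iii)$, and it can only occur when $\Tors X \neq 0$.

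The technical heart is the connectedness dichotomy: one shows that every $C \in |2K_{\canmod}|$ is either $2$-connected (whence $\omega_C$ is very ample or defines the hyperelliptic $g^1_2$, landing in $(i)$ or $(ii)$) or splits as in $(iii)$. I would invoke Franchetta's and Ramanujam's connectedness results together with the structure of $|2K_{\canmod}|$ relative to the torsion group: an effective divisor numerically equivalent to $2K_{\canmod}$ that is not $2$-connected must decompose into two divisors each numerically equivalent to $K_{\canmod}$, and on a Godeaux surface the only effective divisors numerically equivalent to but linearly distinct from $K_X$ are the $K_X + \tau_i$ with $\tau_i$ torsion. The assertion that case $(i)$ is general then follows because the hyperelliptic and reducible loci are proper closed conditions in the pencil $|2K_{\canmod}| \cong \PP^1$.

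The main obstacle, and the step that genuinely requires care rather than bookkeeping, is the connectedness/splitting analysis in case $(iii)$ and the verification that the two pieces $D_i$ land precisely in $|K_X + \tau_i|$ with $\tau_1 + \tau_2 = 0$: one must control how the morphism $\pi \colon X \to \canmod$ to the canonical model interacts with the reducible members, ensure no contracted $(-2)$-curves spoil the numerical computation, and rule out intermediate non-reduced configurations. By contrast, the smooth cases $(i)$ and $(ii)$ are immediate from classical genus-$4$ curve theory once the adjunction computation $g(C) = 4$, $\deg \omega_C = 6$, and $\omega_C \cong \sO_C(3K_{\canmod})$ is in place, so I would spend the bulk of the argument on the reducible case.
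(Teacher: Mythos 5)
Your overall architecture---split off the case where the bicanonical curve decomposes into two torsion pieces, and handle the remaining curves via the canonical/tricanonical embedding---matches the paper's, but there are two genuine gaps. First, you treat cases (i) and (ii) as the locus of \emph{smooth} members and propose to settle them by classical genus-4 curve theory. The lemma, however, asserts the trichotomy for \emph{every} $C\in|2K_{\canmod}|$, and the pencil contains singular, reducible and non-reduced Gorenstein curves that still belong to cases (i) or (ii); the paper explicitly warns that ``honestly hyperelliptic'' does not presuppose smooth or irreducible. The paper disposes of all such curves in one stroke by quoting Catanese--Franciosi--Hulek--Reid, Theorem 3.6: a \emph{3-connected} Gorenstein curve is either honestly hyperelliptic or embedded by $\omega_C$. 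Your plan has no substitute for this step; the Franchetta/Ramanujam results you invoke concern connectedness of members of pluricanonical systems, not very ampleness of $\omega_C$ on singular Gorenstein curves, so they do not do the required work. Relatedly, your dichotomy is phrased in terms of 2-connectedness, whereas the hypothesis needed for the embedding theorem is 3-connectedness.

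Second, the step you yourself label the ``technical heart''---that a non-connected member splits as $(K_X+\tau)+(K_X-\tau)$---is asserted rather than proved. The actual argument is short but is the entire content of case (iii): if $C$ is not 3-connected then $\pi^{*}C$ is not 3-connected (CFHR Lemma 4.2), so $\pi^{*}C=D_{1}+D_{2}$ with $D_{1}D_{2}\le 2$ and $K_{X}D_{i}=1$; then $D_{1}^{2}+D_{2}^{2}=4-2D_{1}D_{2}\ge 0$, so one may assume $D_{1}^{2}\ge 0$; $D_{1}^{2}$ is \emph{odd} because $D_{1}(D_{1}+K_{X})$ is even and $K_{X}D_{1}=1$; hence $D_{1}^{2}\ge 1$, and the algebraic Hodge index theorem forces $D_{1}^{2}=1$ and $D_{1}-K_{X}$ numerically trivial, i.e.\ $D_{1}=K_{X}+\tau_{1}$ with $\tau_{1}$ torsion, nontrivial since $p_g(X)=0$. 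Without the parity observation you cannot exclude $D_{1}^{2}=0$, and without the whole computation you have not shown that the \emph{only} failure of 3-connectedness is the torsion splitting---which is also what guarantees, in the complementary case, that the hypotheses of the embedding theorem actually hold.
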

 
 Note that a Gorenstein curve $C$ is called \textit{honestly hyperelliptic} if there exists a finite morphism $C \rightarrow \P^1$ of degree 2. This definition does not require that $C$ is smooth or irreducible.  

 \begin{proof}
 We follow the proof of \cite{CatanesePignatelli}, Lemma 1.10. Suppose $C$ is not $3$-connected. Then $\pi^{*}C \in |2K_{X}|$ is not $3$-connected as well by \cite{CataneseFranciosiHulekReid} Lemma 4.2, and we have a decomposition
 $$\pi^{*}C= D_{1}+D_{2} \hbox{ with } D_{1}D_{2}\le 2 \hbox{ and } K_{X}D_{i}=1.$$
 Hence $D_{1}^{2}+D_{2}^{2} =(2K_{X})^{2}-D_{1}D_{2}\ge 0$, and  we may assume that $D_{1}^{2}$ is non-negative. $D_{1}^{2}$ is odd, since $D_{1}(D_{1}+K_{X})$ is even. Hence $D_{1}^{2}>0$ and the algebraic Hodge index theorem implies $D_{1}^{2}=1$ and
 $D_{1}=K_{X}+\tau_{1}$ with $\tau_{1}\in  \Tors X \setminus \{0\}$, hence $D_{2}=K_{X}-\tau_{1}$.
 So if we are not in case {\it (iii)}, we may assume that $C$ is 3-connected and the result follows from \cite{CataneseFranciosiHulekReid} Theorem 3.6. 
 \end{proof}

We start characterizing \emph{torsion fibers}, i.e., fibers of type {\it (iii)}.

\begin{theorem}\label{thm_torsionfib} Let $X$ be a marked numerical Godeaux surface with standard resolution $\bf{F}$. A point
$q=(q_{0}:q_{1}) \in \PP^{1}\cong |2K_{X}|$ corresponds to a torsion fiber if and only if $\rank(e(q)) \le 2$.
\end{theorem}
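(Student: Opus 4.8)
The plan is to translate the geometric condition ``$q$ corresponds to a torsion fiber'' (case \textit{(iii)} of Lemma \ref{lem_bicanchar}) into the homological condition $\rank(e(q)) \le 2$ by analyzing how the resolution $\mathbf{F}$ degenerates along the fiber. First I would recall from Notation \ref{not_gensetup} that $e$ is the $3 \times 12$ block of $d_1$ mapping $12S(-7) \to 3S(-5)$ (equivalently the transpose block in the skew-symmetric $d_2$), so $e(q)$ is the specialization of this submatrix at the point $q \in \PP^1 \cong |2K_X|$. The key observation is that the three rows of $e$ correspond to the three generators of $F_0$ in degree $5$, i.e.\ to the three sections of $|3K_X|$ modulo those coming from $|2K_X| \cdot |K_X|$; geometrically these control the image $\phi_3(C)$ of the bicanonical fiber $C = \pi^* q$ under the tricanonical map.

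The main line of argument I would pursue is a dichotomy via Lemma \ref{lem_bicanchar}. A torsion fiber is precisely one that is not $3$-connected, and by the lemma this is the case in which $\pi^* C = D_1 + D_2$ splits with $D_i \in |K_X + \tau_i|$. In the split case the curve $C$ is degenerate in its tricanonical embedding: the restriction $R(X)|_C$ of the canonical ring to the fiber loses a generator (or equivalently a linear syzygy degenerates), forcing the rank of the relevant syzygy block to drop. Concretely, I would restrict the complex $\mathbf{F}$ to the fiber over $q$ — i.e.\ specialize $x_0, x_1$ to the coordinates of $q$ — and compute the fiber module $R \otimes_S \Bbbk(q)$; the existence of the extra effective divisor $D_i = K_X + \tau_i$ produces an extra independent section, which manifests as a drop in $\rank(e(q))$ from its generic value $3$ to $\le 2$. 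For the converse I would argue that if $\rank(e(q)) \le 2$, then the graded piece of the fiber ring in degree $5$ is too large to correspond to the complete-intersection case \textit{(i)}, and is incompatible with the honestly hyperelliptic case \textit{(ii)} (whose $\phi_3(C)$ is a twisted cubic, which one checks keeps $e(q)$ of full rank $3$), leaving only case \textit{(iii)}.

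In practice I expect the cleanest route is to make the rank condition intrinsic: identify $\rank e(q)$ with the dimension of a specific cohomology or Hom space attached to the fiber $C$, so that the rank drop becomes equivalent to $h^0(C, \mathcal{L})$ jumping for the line bundle $\mathcal{L}$ governing the torsion splitting. This reduces both implications to a single rank-versus-$h^0$ equality, which separates the three cases of Lemma \ref{lem_bicanchar} by their distinct tricanonical images.

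The hard part will be verifying that it is exactly the block $e$, and not some other part of $d_1$ or $d_2$, whose rank detects the splitting, and that the rank threshold is exactly $2$ rather than some other value. This requires a careful bookkeeping of which syzygies of $R(X)$ survive restriction to $C$ and matching the degree-$5$ generators of $F_0$ against the sections $H^0(C, \omega_C)$ versus $H^0(C, \mathcal{O}_C(K_X + \tau_i))$; I would likely confirm the numerology by a direct $\Mac2$ computation on an explicit torsion example, and then argue that the rank drop is forced abstractly by the decomposition $\pi^* C = D_1 + D_2$ rather than being accidental to the example.
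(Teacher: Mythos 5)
Your overall shape (relate $\rank e(q)$ to the trichotomy of Lemma \ref{lem_bicanchar}) is reasonable, but both directions diverge from the paper's proof, and in the necessary direction there is a genuine gap. For sufficiency the paper does not eliminate cases \emph{(i)} and \emph{(ii)} by counting graded pieces; it observes that $\rank e(q)\le 2$ forces the whole matrix $d_1$ to drop rank at $\hat q=(q_0:q_1:0:\dots:0)$ because of the block structure of $d_1(\hat q)$, so that $\hat q$ lies on $Y$ and is a base point of $|3K_{\canmod}|$, and then invokes Miyaoka's result that such base points are exactly the intersection points $\hat D_1\cap\hat D_2$ of a torsion fiber. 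Your proposal never uses Miyaoka's base-point characterization, which is the decisive external input here; also, a small but relevant slip: the three rows of $e$ correspond to the three degree-$5$ module generators of $R(X)$ over $S$ (sections of $|5K_X|$ not in the image of $H^0(2K_X)\otimes H^0(3K_X)$), not to sections of $|3K_X|$.

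The serious gap is in the direction ``torsion fiber $\Rightarrow\rank e(q)\le 2$.'' What the splitting $\pi^*C=D_1+D_2$ gives you directly is that the base point of $|3K_X|$ sits at $\hat q$, hence that $d_1(\hat q)$ drops rank -- but a priori that drop could occur in the $a$- or $c$-blocks rather than in $e$ (indeed, by Theorem \ref{thm_hyplocusminors} a rank drop of the $a$-block detects hyperelliptic fibers, so different blocks do detect different degenerations). Your assertion that ``the extra independent section manifests as a drop in $\rank(e(q))$'' is exactly the statement that needs proof, and you offer no mechanism for it beyond checking an example in \Mac2, which cannot give necessity for all marked torsion surfaces. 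The paper avoids this local analysis entirely: it decomposes $V(I_3(e))\cap Q$ into its seven components, exhibits $8$-dimensional families of marked $\ZZ/3\ZZ$- and $\ZZ/5\ZZ$-surfaces whose lines meet the three linear components, and then uses the known irreducibility of the moduli of torsion Godeaux surfaces (Reid, Miyaoka) together with openness of the marking condition to conclude that \emph{every} marked torsion surface arises this way. Some argument of this global, classification-based type (or a genuinely new local argument isolating the $e$-block) is needed; without it your proof of necessity does not close.
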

\begin{proof}  Let $\varphi\colon \canmod \rightarrow Y \subset \PP(2^2,3^4)$ be our model in the weighted $\PP^{5}$. Suppose $\rank(e(q)) \le 2$. 
Then $\hat q =(q_{0}:q_{1}:0:0:0:0)$ is a point where the matrix $d_{1}$ drops rank, because
$$ d_1(\hat q) = \left(  \begin{array}{c|ccc |ccc}
    \ast & & 0 & & & \ast & \\ \hline
    a(q)    & & 0 & & & c(q) &\\ \hline
    0    & & e(q) & & & 0 & 
  \end{array} \right).
  $$
So $\hat q \in V(\ann \coker d_{1}) \cap V(y_{0}, \ldots, y_{3}) \subset Y$ corresponds to a base point of $|3K_{\canmod}|$.
By Miyaoka's results (see \cite[Section 3]{Miyaoka}) $\hat q$ is the intersection point 
$\hat D_{1}\cap\hat D_{2}$, where $D_{1}+D_{2} \in |2K_{X}|$  is a torsion fiber
which coincides with the fiber over $q$. Thus $\rank(e(q)) \le 2$ is sufficient. 

The proof that the condition is also necessary is more subtle. The ideal $I_{3}(e)+I(Q) \subset S_{a}$ of $3\times3$-minors of $e$ decomposes in 7 components with Betti numbers
$$\begin{matrix}
      &0&1\\\text{total:}&1&7\\\text{0:}&1&6\\\text{1:}&\text{.}&1\\\end{matrix}\quad \hbox{ or } \quad
      \begin{matrix}
      &0&1\\\text{total:}&1&8\\\text{0:}&1&8\cr \\\end{matrix}
$$
and degree $2$ and $1$ respectively.
We  have encountered the 4 components of the first type already in Section \ref{sec_prelim}. The 3 components of the second type lead to torsion surfaces. Below we will show that there are $8$-dimensional families of $\ZZ/3\ZZ$ and $\ZZ/5\ZZ$ marked Godeaux surfaces, where the $e$ matrix drops rank in 1 respectively 2 points of the loci of the second type.
Since the family of torsion $\ZZ/3\ZZ$ and $\ZZ/5\ZZ$ Godeaux surfaces is known to be irreducible by \cite{ReidGodeaux78} and \cite{Miyaoka} and the condition of being marked in \cite[Proposition 3.4]{SS20} is an open condition, we see that intersecting one or two loci of
the second kind is a necessary condition for torsion marked Godeaux surfaces.
\end{proof}

As a corollary of the proof we obtain the following:

\begin{proposition}\label{excludedCompo} Let $\ell \subset \PP^{11}$ be a line which intersects  $V(I_{3}(e))\cap Q$ in a point of a degree $2$ component which is not contained in a degree $1$ component. Then $\ell$ does not lead to a numerical Godeaux surface. \qed
\end{proposition}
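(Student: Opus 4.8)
The plan is to prove this by contradiction, reading it off directly from Theorem \ref{thm_torsionfib} and the component analysis carried out in its proof. So suppose that $\ell$ (which, in order to lead to any surface at all, must lie in $Q$) does produce a numerical Godeaux surface $X$, and let $p$ be the assumed intersection point of $\ell$ with a degree-$2$ component of $V(I_3(e)) \cap Q$, with $p$ lying on no degree-$1$ component. Writing $q = (q_0:q_1) \in \PP^1 \cong |2K_X|$ for the parameter value of $p$ on $\ell$, the containment $p \in V(I_3(e))$ gives $\rank(e(q)) \le 2$.

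First I would apply the sufficiency direction of Theorem \ref{thm_torsionfib}: since $\rank(e(q)) \le 2$, the fiber over $q$ is a torsion fiber, and in particular $\Tors X \neq 0$, so $X$ is a $\ZZ/3\ZZ$- or $\ZZ/5\ZZ$-Godeaux surface. Next I would feed this back into the necessity analysis of the same proof. There $V(I_3(e)) \cap Q$ is split into the four degree-$2$ components (the ones already met in Section \ref{sec_prelim} as part of the support of $H_0(C_1)$) and the three degree-$1$ components, and it is established that for a torsion marked Godeaux surface the matrix $e$ drops rank \emph{only} at points of the degree-$1$ loci --- one point for $\ZZ/3\ZZ$, two for $\ZZ/5\ZZ$. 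But $q$ is a rank-drop point of $e$ lying on a degree-$2$ component and, by hypothesis, on no degree-$1$ component. This contradicts the classification, so $\ell$ cannot lead to a Godeaux surface.

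The main obstacle is the one inherited from the theorem: one must know that for torsion surfaces the rank of $e$ drops \emph{exactly} on the degree-$1$ loci and nowhere else, rather than merely that the line meets those loci. This is precisely where the irreducibility of the families of Reid and Miyaoka and the exact torsion-fiber count ($1$ for $\ZZ/3\ZZ$, $2$ for $\ZZ/5\ZZ$) enter, as in the necessity part of Theorem \ref{thm_torsionfib}. As a sanity check and alternative viewpoint, one can note that the degree-$2$ components coincide with part of the support of $H_0(C_1)$; a line through such a point violates the codimension $\ge 2$ hypothesis needed for the exact sequence of global sections over $\ell$, so the second construction step fails to yield the expected $4$-dimensional solution space --- which is the structural reason these components are excluded.
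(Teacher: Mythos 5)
Your argument is correct and is essentially the paper's own: Proposition \ref{excludedCompo} is stated as a corollary of the proof of Theorem \ref{thm_torsionfib}, and the intended justification is exactly the combination you give — the sufficiency direction forces a torsion fiber at the intersection point, while the irreducibility of the Reid--Miyaoka families plus the openness of the marked condition shows that for genuine torsion surfaces the rank of $e$ drops only along the degree-$1$ ($\PP^3$) components, yielding the contradiction. Your closing remark about $H_0(C_1)$ is a reasonable heuristic but is not needed (and would not by itself exclude a Godeaux surface); the main argument stands on its own.
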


\begin{remark} In Section \ref{ghost} we will see that such lines do lead to surfaces which however are always reducible.
\end{remark}
\begin{theorem}\label{thm_hyplocusminors}
Let $X$ be a marked numerical Godeaux surface, and let $q \in \PP^1$. Then $\rank(a(q)) = 3$ if and only if the corresponding fiber $C_q \in |2K_{\canmod}|$ is hyperelliptic.  
\end{theorem}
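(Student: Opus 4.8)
The strategy is to mirror the structure of the proof of Theorem \ref{thm_torsionfib}, exploiting the parallel role of the matrix $a$ for hyperelliptic fibers that $e$ plays for torsion fibers. By Lemma \ref{lem_bicanchar}, a non-3-connected fiber $C_q$ is either honestly hyperelliptic (case (ii)) or a torsion fiber (case (iii)), and in the hyperelliptic case $\phi_3(C_q)$ degenerates from a complete intersection of a quadric and a cubic to a twisted cubic. The geometric content of $\rank(a(q)) = 3$ should be that the quadric defining $\phi_3(C_q)$ degenerates, forcing the tricanonical image of the fiber onto a twisted cubic, which is precisely the signature of honest hyperellipticity.

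\textbf{Key steps.} First I would analyze how the restriction of the resolution $\mathbf{F}$ to the fiber over $q$ encodes the equations of $\phi_3(C_q) \subset \PP^3$. Evaluating $d_1$ at $\hat q = (q_0 : q_1 : 0 : \cdots : 0)$ as in the previous proof, the block $a(q)$ governs the linear-in-$y$ part of the generators of the fiber ideal; a general fiber sits as a $(2,3)$ complete intersection, with the quadric coming from the rank of $a(q)$. I would show that when $\rank(a(q)) = 3$ the associated quadric drops rank or vanishes in the appropriate sense, so that the fiber ideal is generated by the maximal minors of a suitable matrix whose resolution is that of a twisted cubic (the scroll/determinantal presentation). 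Concretely, one expects the Betti table of the fiber to jump exactly when $\rank(a(q))=3$, and I would read off the twisted-cubic resolution from this degeneration.

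\textbf{Sufficiency and necessity.} For sufficiency, $\rank(a(q)) = 3$ implies the tricanonical image is a twisted cubic, hence $C_q$ admits a degree-2 map to $\PP^1$ and is honestly hyperelliptic by the characterization in Lemma \ref{lem_bicanchar}(ii). For necessity I would argue as follows: if $C_q$ is hyperelliptic, then by Lemma \ref{lem_bicanchar} it cannot be of type (i), so $\phi_3(C_q)$ is a twisted cubic; pulling this back to a rank condition on the restricted syzygy matrix forces $a(q)$ to have rank exactly $3$ (rank $4$ would give the generic complete-intersection fiber, and we must rule out rank $\le 2$, which by the analysis of the $4\times 4$ minors locus $H_0(C_2)$ and its relation to $I_3(e)$ corresponds instead to a torsion fiber, case (iii)). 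The generic value of $\rank(a(q))$ is $4$, so the condition $\rank(a(q)) = 3$ cuts out a proper subvariety, consistent with the $5$-dimensional locus $V_{hyp}$ claimed in Theorem \ref{thm_hyplocusminors} of the Introduction.

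\textbf{Main obstacle.} The delicate point is necessity, specifically disentangling the rank-$3$ condition on $a$ from the rank-$\le 2$ condition on $e$ that characterizes torsion fibers: both are degeneracy conditions on submatrices of the same $d_1$, and one must verify they genuinely detect the different geometric cases (ii) versus (iii) of Lemma \ref{lem_bicanchar} rather than overlapping. I expect this requires a careful bookkeeping of which minors of the restricted $d_1$ cut out the twisted cubic versus the two-component torsion configuration, likely supported by an explicit $\Mac2$ computation of the relevant Betti tables and minor ideals exactly as was done for $I_3(e)$ in the torsion case. Controlling the rank-$4$ (generic) stratum to confirm that rank drops to $3$ precisely on $V_{hyp}$ and not lower is the crux of the argument.
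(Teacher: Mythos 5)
Your proposal follows essentially the same route as the paper: restrict the resolution to the fiber, read off the six relations among $x_1^3,\{x_1z_j\},\{y_iy_j\}$ from the first columns of $d_1\otimes S/(x_0)$, interpret $\rank(a(q))=3$ as producing exactly three quadrics in the $y_i$'s alone (the twisted cubic of Lemma \ref{lem_bicanchar}(ii)), and separate the hyperelliptic case from the torsion case via computed containments among the minor ideals of $a$ and $e$ (in the paper, $I_3(e)+I(Q)\subset I_3(a)+I(Q)$ and the five-component decomposition of $I_4(a)+I(Q)$ together with Proposition \ref{excludedCompo}). The only cosmetic difference is that the paper closes the converse by elimination — at least two $y$-quadrics rule out case (i) and non-torsion rules out case (iii) — rather than by directly exhibiting the twisted-cubic resolution, but you correctly identify that the crux is exactly this disentangling of the rank strata of $a$ from those of $e$.
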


\begin{proof} After applying a linear change of coordinates if necessary, we may assume that $q=(0:1)$ and
$$C = \Proj(R(X)/(x_0)).$$  
Furthermore, as  $h^1(X,\sO_X(nK_X)) = 0$ for all $n$, we get 
   \[R(X)/(x_0) \cong \bigoplus_{n \geq 0} H^0(C, \sO_C(nK_{X}|{_C})).\]
Using this, we compute  that $h^0(C, \sO_C(6K_{X}|{_C})) = h^0(C,\sO_C(2K_C)) = 9$ and thus, there are 6 relations between the 15 global sections 
\[x_1^3, \{x_1z_j\}_{0 \leq j \leq 3}, \{y_iy_j\}_{0 \leq i \leq j \leq 3} \in H^0(C, \sO_C(6K_{X}|{_C})). \]
These relations are given by the first 6 columns of $\tilde{d_1} := d_1 \otimes S/(x_0)$.

If $C_{q}$ is hyperelliptic, then by Lemma \ref{lem_bicanchar} there are 3 equations among these relations which are quadrics in the $y_{i}$'s alone. Thus $\rank a(q)\le 3$. 
By computation we see that $I_{3}(e) +I(Q) \subset I_{3}(a)+I(Q)$, hence a point with $\rank a(q) \le 2$ belongs to a torsion fiber.

Conversely, suppose that $\rank a(q)=3$.
Then there exists a three-dimensional space of linear combinations of the first 6 entries 
$$
y_0y_1 + \alpha_1 x_1^3, \ldots, y_2y_3 + \alpha_6 x_1^3
$$ 
of the first row of  $\tilde{d_1}$ in the ideal of $C_{q}$ in $R(X)/(x_0)$. We have to show that
these equations are quadratic in the $y_{i}$'s alone and that they define a rational normal curve.

The ideal $I_{4}(a)+I(Q)$ of $4\times 4$ minors of $a$ decomposes into 5 components with Betti numbers
$$
\hbox{ four of type }\quad \begin{matrix}
      &0&1\\\text{total:}&1&7\\\text{0:}&1&6\\\text{1:}&\text{.}&1\cr \cr \\\end{matrix} \quad \hbox{ and the fifth of type } \quad
\begin{matrix}
      &0&1\\\text{total:}&1&12\\\text{0:}&1&\text{.}\\\text{1:}&\text{.}&4\\\text{2
      :}&\text{.}&4\\\text{3:}&\text{.}&4\\\end{matrix}. 
 $$
The four degree $2$ components belong also to $I_{3}(e) + I(Q)$, hence are excluded, since they do not lead to marked Godeaux surfaces by Proposition   \ref{excludedCompo} unless the point $q$ lies also in a linear component of  $I_{3}(e)  + I(Q)$. Since $\rank a(\tilde q)\le 2$ for any $\tilde q$ contained in one of the linear components of $I_{3}(e)+I(Q)$, the fiber over $q$ is not a torsion fiber, because $\rank a(q)=3$. Since there are at least
2 quadrics in the $y_{i}$'s alone and $C_{q}$ is not at a torsion fiber, it must be a hyperelliptic fiber, and there are actually
precisely 3 quadrics in the $y_{i}$'s alone by Lemma \ref{lem_bicanchar}. 
\end{proof}

We denote by $J_{hyp} \subset S_{a}$ the fifth component of $I_{4}(a)+I(Q)$.
From the proof of Theorem \ref{thm_hyplocusminors} we obtain the following:
\begin{corollary}\label{hyplocus} Let $q\in \ell \subset Q \subset \PP^{11}$ be a point on a line in $Q$ and let $\canmod$ be a Godeaux surface constructed from $\ell$.
Then $C_{q} \subset \canmod$ is a hyperelliptic fiber if and only if $q \in V(J_{hyp})$. \qed
\end{corollary}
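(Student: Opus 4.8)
The plan is to read the corollary off from Theorem \ref{thm_hyplocusminors} and its proof, rephrasing the rank condition on $a$ as membership in the component $J_{hyp}$. By Theorem \ref{thm_hyplocusminors} the fiber $C_q$ is hyperelliptic precisely when $\rank(a(q)) = 3$, so it suffices to show that along a line $\ell$ producing a Godeaux surface $\canmod$ the rank-$3$ locus of $a$ coincides with $\ell \cap V(J_{hyp})$. First I translate ranks into vanishing conditions: $\rank(a(q)) \le 3$ is equivalent to $q \in V(I_4(a))$, and $\rank(a(q)) = 3$ to $q \in V(I_4(a)) \setminus V(I_3(a))$. Thus the hyperelliptic points of $\ell$ are exactly the points of $\ell \cap (V(I_4(a)) \setminus V(I_3(a)))$, and the task is to compare this set with $\ell \cap V(J_{hyp})$.

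For the first inclusion I invoke the decomposition computed in the proof of Theorem \ref{thm_hyplocusminors}: $V(I_4(a)+I(Q))$ is the union of the fifth component $V(J_{hyp})$ and four degree-$2$ components, each of which also lies in $V(I_3(e)+I(Q))$. For a line $\ell$ yielding a Godeaux surface, Proposition \ref{excludedCompo} forbids $\ell$ from meeting a degree-$2$ component except at a point lying on one of the three linear components of $I_3(e)+I(Q)$; and at such points $\rank(a) \le 2$, as recorded in that proof. Hence the intersection of $\ell$ with any degree-$2$ component is contained in $V(I_3(a))$ and carries no rank-$3$ point. Consequently every rank-$3$ point of $\ell$ lies on $V(J_{hyp})$, which proves that $C_q$ hyperelliptic implies $q \in V(J_{hyp})$.

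For the converse I must verify that each $q \in \ell \cap V(J_{hyp})$ genuinely satisfies $\rank(a(q)) = 3$, i.e.\ $q \notin V(I_3(a))$. Since $I_3(e)+I(Q) \subseteq I_3(a)+I(Q)$, the drop-to-rank-$2$ locus $V(I_3(a))$ is contained in $V(I_3(e))$, whose minimal primes are the four degree-$2$ components and the three linear torsion components. A point of $\ell \cap V(J_{hyp})$ lying in $V(I_3(a))$ would therefore sit either on a degree-$2$ component (again forced onto a torsion component by Proposition \ref{excludedCompo}) or directly on a torsion component; by Theorem \ref{thm_torsionfib} it would then define a torsion fiber. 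Such a fiber is of type (iii) in Lemma \ref{lem_bicanchar} and is not $3$-connected, whereas a hyperelliptic fiber of type (ii) is obtained from the $3$-connected branch of that lemma, so the two are mutually exclusive and no single $q$ can be both. The finite \Mac2 check that $J_{hyp} \not\subseteq V(I_3(a))$ — equivalently that the generic rank of $a$ on $V(J_{hyp})$ is $3$ — shows that $V(J_{hyp}) \cap V(I_3(a))$ is a proper closed sublocus, so a Godeaux-producing line meets $V(J_{hyp})$ only in its rank-$3$ part. Combining the two inclusions yields the stated equivalence.

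The main obstacle I anticipate is exactly this last step: ruling out that a Godeaux-producing line passes through $V(J_{hyp}) \cap V(I_3(a))$, i.e.\ through a point lying on the hyperelliptic component at which $a$ nevertheless drops to rank $2$. Making this airtight for an arbitrary Godeaux-producing $\ell$ — rather than merely for a general line — is what the computational separation of the rank-$2$ locus into torsion and excluded components is designed to achieve, reinforced by the incompatibility of the hyperelliptic and torsion cases in Lemma \ref{lem_bicanchar}; isolating and confirming this disjointness is the delicate part of the argument.
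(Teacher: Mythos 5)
Your argument is essentially the paper's: the corollary carries no separate proof there and is read off from the proof of Theorem \ref{thm_hyplocusminors} exactly as you do --- the four degree-$2$ components of $I_4(a)+I(Q)$ are discarded via Proposition \ref{excludedCompo} together with the containment $I_3(e)+I(Q)\subset I_3(a)+I(Q)$, leaving $V(J_{hyp})$ as the only component of the rank-$\le 3$ locus that a Godeaux-producing line can meet at a point where $\rank a = 3$. The one wobble is your closing inference from ``$V(J_{hyp})\cap V(I_3(a))$ is a proper closed sublocus'' to ``an arbitrary Godeaux-producing line meets $V(J_{hyp})$ only in its rank-$3$ part,'' which is not a valid deduction for a specific (as opposed to general) line; but the paper leaves exactly the same point implicit, and you correctly flag it as the delicate step rather than claiming more than the argument delivers.
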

Next we describe the image of a hyperelliptic fiber under the birational morphism $\varphi\colon \canmod \rightarrow Y$. We already know that a hyperelliptic curve is mapped 2-to-1 under the rational map $\phi_2\times \phi_3\colon X \dashrightarrow \PP^1 \times \PP^3$. 

\begin{proposition}
	Let $C \in |2K_{\canmod}|$ be a honestly hyperelliptic curve. Then the restriction of $\varphi \colon \canmod \rightarrow Y$ to $C$ is a birational morphism onto its image $G$ with $p_a(G) = p_a(C)+1$.
\end{proposition}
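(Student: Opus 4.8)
The plan is to work directly with the restricted linear systems on $C$. After a linear change of the bicanonical coordinates I may assume $C=V(x_0)$, so that $\varphi|_C$ is the weighted morphism $p\mapsto (x_1(p):y_0(p):\cdots:y_3(p))$, landing in the hyperplane $\{x_0=0\}=\PP(2,3^4)$; since the $y_j$ restricted to $C$ have no common zero (the canonical map of $C$ is base-point free), this is a genuine morphism, and I call its image $G$. By adjunction on the canonical model $\omega_C=(K_{\canmod}+C)|_C=3K_{\canmod}|_C$, so $2p_a(C)-2=3K_{\canmod}\cdot 2K_{\canmod}=6K_{\canmod}^2=6$ and $p_a(C)=4$. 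As $C$ is honestly hyperelliptic, Lemma \ref{lem_bicanchar}(ii) identifies $\phi_3|_C=\phi_{\omega_C}$ with the $2:1$ canonical map onto a twisted cubic $T\cong\PP^1$; write $\iota$ for the hyperelliptic involution and $h\colon C\to T$ for the quotient. Two structural facts drive the argument: first, $H^0(\omega_C)=h^\ast H^0(\PP^1,\sO(3))$, so the sections $y_j|_C$ are pulled back from $T$ and are genuinely $\iota$-invariant; second, $3K|_C=\omega_C=3g^1_2$ forces $K|_C=g^1_2+\tau$ with $\tau$ a $3$-torsion class, hence $2K|_C=2g^1_2+2\tau$.

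For birationality, note that composing $\varphi|_C$ with the projection to the $y$-coordinates recovers the $2:1$ map $\phi_3|_C$, so $\varphi|_C$ is birational exactly when it separates a general conjugate pair $\{p,\iota p\}$. Because $6K|_C=2\omega_C$ is $\iota$-invariant, both $x_1^3$ and $\iota^\ast(x_1^3)$ are sections of $6K|_C$, and $R:=\iota^\ast(x_1^3)/x_1^3$ is a well-defined rational function on $C$. I would show that $\varphi(p)=\varphi(\iota p)$ holds precisely where $R(p)=1$: matching the two weighted points forces a scalar $\lambda$ with $\lambda^3=1$ (which fixes the invariant $y$'s) and $x_1(\iota p)=\lambda^2 x_1(p)$, and cubing turns this into $\iota^\ast(x_1^3)=x_1^3$ at $p$. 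If $R\equiv 1$, then $\mathrm{div}(x_1|_C)$ is $\iota$-invariant, forcing $2K|_C=2g^1_2$ and $\tau=0$; since a hyperelliptic (as opposed to torsion) fiber has $\tau\neq 0$, we conclude $R\not\equiv 1$, and hence $\varphi|_C$ is birational onto $G$.

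For the genus jump, set $\nu:=\varphi|_C\colon C\to G$. The conductor sequence $0\to\sO_G\to\nu_\ast\sO_C\to\mathcal Q\to 0$ gives $p_a(G)=p_a(C)+\dim_\Bbbk\mathcal Q$, so it remains to see that $\nu$ identifies exactly one conjugate pair into a single node. The non-injectivity locus consists of the non-fixed points $p$ with $R(p)=1$, i.e.\ the zeros of the anti-invariant section $\iota^\ast(x_1^3)-x_1^3\in H^0(6K|_C)$ lying off the ramification of $h$. This section has degree $\deg(6K|_C)=12$; generically it vanishes simply at each of the $2g+2=10$ Weierstrass points (where $p=\iota p$ and no gluing occurs), leaving exactly one genuine $\iota$-conjugate pair $\{p_1,\iota p_1\}$. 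Thus $\nu$ is injective except that it glues $p_1$ to $\iota p_1$, producing one node, $\dim_\Bbbk\mathcal Q=1$, and therefore $p_a(G)=p_a(C)+1$.

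The delicate point, and the main obstacle, is this final counting. I must guarantee that the $12$ zeros of $\iota^\ast(x_1^3)-x_1^3$ split as exactly $10$ simple zeros at the Weierstrass points plus one honest conjugate pair, and that this pair is glued to an \emph{ordinary} node with transverse branches, so that the length of $\mathcal Q$ equals $1$ rather than something larger. This requires that no Weierstrass point lies on $\mathrm{div}(x_1|_C)$, that $x_1^3$ has non-vanishing odd part at each Weierstrass point, and an immersion/transversality check at $p_1$; I expect to secure these either from the explicit equations of the family (via \Mac2, as elsewhere in the paper) or by a dimension/semicontinuity argument showing that the length of $\mathcal Q$ is constant and equal to its generic value $1$. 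The remaining care is needed when $C$ is reducible or singular, where the naive Weierstrass count must be replaced by the correct ramification and $\delta$-invariant bookkeeping.
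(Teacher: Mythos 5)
Your setup is sound and genuinely different from the paper's, but the decisive step is left open, and you say so yourself. Everything up to the conductor count is essentially correct: the reduction of $\varphi(p)=\varphi(\iota p)$ to $\iota^*(x_1^3)=x_1^3$ at $p$ (using that the $y_j|_C$ are pulled back from the twisted cubic, so the rescaling $\lambda$ must satisfy $\lambda^3=1$), and the exclusion of $R\equiv 1$ via the torsion class $\tau=K_X|_C-g^1_2$ (though here you should justify $\tau\neq 0$: it follows from $h^0(C,K_X|_C)=h^0(X,K_X)=0$, which you assert rather than prove, and your inference from ``$\mathrm{div}(x_1|_C)$ is $\iota$-invariant'' to ``$2K|_C=2g^1_2$'' needs the divisibility-by-$3$ of the multiplicities, since an invariant degree-$4$ divisor could a priori be a sum of four Weierstrass points). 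The genuine gap is exactly where you place it: you must show that the anti-invariant section $\iota^*(x_1^3)-x_1^3\in H^0(6K|_C)$ vanishes to order one at each of the ten ramification points and has exactly one residual conjugate pair, glued to an \emph{ordinary} node, so that $\dim_\Bbbk\mathcal Q=1$ rather than $\geq 2$; and your bookkeeping ($2g+2=10$ Weierstrass points, the invariant/anti-invariant splitting of $H^0(2\omega_C)$) is carried out only for $C$ smooth and irreducible, whereas the proposition concerns honestly hyperelliptic Gorenstein curves that may be reducible or singular. As written, the proof establishes birationality but not the precise genus jump.

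For comparison, the paper avoids this count entirely by a projective argument: $C$ embedded by $|6K_0|$ in $\PP^8$ lies on a rational normal scroll $S(6,1)$ in the class $2H-2L$, hence misses the directrix, and $\varphi|_C$ is the restriction of the projection from a point $p$ of the directrix onto the cone $S(6,0)$ over a sextic rational normal curve. The unique ruling line through $p$ meets $C$ in $(2H-2L)\cdot L=2$ points, which is the only identification, and since $G\sim 2H'$ on $S(6,0)$ and misses the vertex, adjunction on the cone gives $p_a(G)=5=p_a(C)+1$ directly. That one adjunction computation packages your entire $\delta$-invariant analysis (including transversality at the node and the degenerate configurations you worry about) and works uniformly for singular or reducible $C$. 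If you want to complete your route instead, you would need to either verify the transversality and ramification conditions from the explicit equations, or run a semicontinuity argument over the $7$-dimensional family of hyperelliptic fibers; the scroll argument is the cleaner way to certify that the generic value $1$ is in fact the constant value.
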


\begin{corollary} If there exists a hyperelliptic fiber, then the morphism $\varphi\colon \canmod \to Y \subset \PP(2^{2},3^{4})$
is not an isomorphism.\qed
\end{corollary}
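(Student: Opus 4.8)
The plan is to argue by contradiction, leveraging the arithmetic-genus jump recorded in the Proposition that immediately precedes this corollary. Suppose, for contradiction, that $\varphi\colon \canmod \to Y$ is an isomorphism. Since a hyperelliptic fiber exists by hypothesis, I would fix a honestly hyperelliptic curve $C \in |2K_{\canmod}|$ and set $G = \varphi(C) \subset Y$ to be its image.

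The first step is to note that an isomorphism of schemes restricts to an isomorphism onto the image of any closed subscheme; in particular $\varphi|_C \colon C \to G$ would be an isomorphism. Consequently $C$ and $G$ are isomorphic Gorenstein curves, and their arithmetic genera must agree, so $p_a(G) = p_a(C)$.

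This is where the contradiction arises. The Proposition above states that the restriction $\varphi|_C$ is a birational morphism onto $G$ with $p_a(G) = p_a(C) + 1$. These two equalities cannot hold simultaneously, so the assumption that $\varphi$ is an isomorphism is untenable. Hence, whenever a hyperelliptic fiber is present, $\varphi$ fails to be an isomorphism.

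I expect no substantive obstacle in this argument: the entire weight of the statement is carried by the preceding Proposition, whose content is precisely that $\varphi$ raises the arithmetic genus of a hyperelliptic fiber by one. The only elementary point to confirm is the invariance of arithmetic genus under an isomorphism restricted to the subcurve $C$, which is immediate. The conclusion then follows formally.
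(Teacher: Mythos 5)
Your argument is correct and is exactly the one the paper intends: the corollary is stated with an immediate \qed because it follows directly from the preceding Proposition, and your contradiction via the genus jump $p_a(G) = p_a(C)+1$ versus the genus invariance under an isomorphism is the implicit reasoning. No discrepancy with the paper's approach.
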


\begin{proof}[Proof of Proposition]
	The idea of the proof is to embed $C$ (respectively $G$) into projective spaces and show that the induced morphism is  the restriction of a projection from a point on a rational normal scroll. 
	
   Let $K_0$ be the Cartier divisor corresponding to the $g^1_2$ on $C$.  
We consider the very ample line bundle $\sO_C(6K_0) = \sO_C(K_0) \otimes \sO_C(5K_0)$ which is the restriction of $\sO_X(6K_X)$ to $C$ and set 
 $V = H^0(C,\sO_C(6K_0))$. 
Hence $C$ embedded in $\P(V) \cong \P^8$ is contained in a smooth rational normal scroll of type $S(6,1)$ (as $H^0(C,\sO_C(5K_0)) = \Sym^5(H^0(C,\sO_C(K_0)) + \langle u \rangle$ for some global section $u$). The divisor class of $C$ in $S(6,1)$
of type $2H - 2L$, where $L$ denotes the class of a ruling and $H$ the class of a hyperplane section.  Indeed, let $C \sim aH+bL$ for some $a,b$. Then, clearly $a =2$ by the definition of the scroll and $C$ being hyperelliptic, whereas  $b=-2$ follows from the fact that $C$ has degree 12 under the morphism induced by $6K_X$. 
In particular, $C$ does not meet the directrix $H-6L$ of the scroll. 

  Now we consider the image $G \subset \P = \PP(2,3^4)$ of $C$ under the birational morphism $\varphi \colon \canmod \rightarrow Y$.  
Denoting by $R_i$ a rational normal curve of degree $i$ in $\P^i$, we have a diagram 
  \begin{equation*}
  \begin{tikzpicture}[baseline=(current  bounding  box.center)]
  \matrix (m) [matrix of math nodes,row sep=1.6em,column sep=3.9em,minimum width=2em,text height=1.5ex, text
  depth=0.25ex]
  {   C & & S(6,1) \subset \P^8 \\
  	 & G  & S(6,0)\subset \P^7 \\
   R_3 & & \quad R_6 \subset \P^6\\};
  \path[-stealth]
  (m-1-1) edge (m-2-2)
                edge node[left] {$K_C$} (m-3-1)
                edge [commutative diagrams/hook] (m-1-3)
  (m-2-2)  edge[commutative diagrams/hook] node[above] {$\sO_{\PP}(6)|_G$} (m-2-3)
                  edge (m-3-1)
   (m-3-1) edge (m-3-3)
  ;
  \path[-  stealth]
  (m-1-3) edge[dashed] (m-2-3)
  (m-2-3) edge[dashed] (m-3-3)
  ;
  \end{tikzpicture}.
\end{equation*}
The image of  $G$ is contained in a cone $S(6,0)$ over $R_ 6 \subset \P^6$. 
The morphism $C \rightarrow G$ is the restriction of the projection from a point $p$ on the directrix of $S(6,1)$ to $S(6,0)$. Moreover, the unique line of the ruling of $S(6,1)$ through $p$ is the only line through $p$ which intersects $C'$ in two points (counted with multiplicity). Let $H'$ denote the class of a hyperplane section on $S(6,0)$.  Since  $G$ does not meet the vertex of the cone $S(6,0)$ and  $\deg(G) = 12$ in $\P^7$, we see that $G' \sim 2H'$. Hence, using adjunction, we obtain 
 $p_a(G)= 5$. 
So $\varphi|_C \colon C \rightarrow G$ is birational, and $p_a(G) = p_a(C)+1$ holds.
\end{proof}

\subsection{The hyperelliptic locus}\label{subsec_hyplocus}
By Corollary \ref{hyplocus} a point $p \in \ell$ corresponds to a hyperelliptic curve in $|2K|$ if and only if $p\in V_{hyp}=V(J_{hyp})$.
 The ideal  $J_{hyp}$  has codimension $6$ and its free resolution has Betti table
 
 \begin{equation*}
\begin{matrix}
       &0&1&2&3&4&5&6\\\text{total:}&1&12&50&120&153&92&20\\\text{0:}
&1&\text{.}&\text{.}&\text{.}&\text{.}&\text{.}&\text{.}\\\text{1:}&\text
       {.}&4&\text{.}&\text{.}&\text{.}&\text{.}&\text{.}\\\text{2:}&\text
       {.}&4&6&\text{.}&\text{.}&\text{.}&\text{.}\\\text{3:}&\text{.}&4&44&
       40&4&\text{.}&\text{.}\\\text{4:}&\text{.}&\text{.}&\text{.}&80&149&
       92&20\\\end{matrix}.
\end{equation*}
The generators of $J_{hyp}$ can be obtained by calling the function 
\href{https://www.math.uni-sb.de/ag/schreyer/images/data/computeralgebra/M2/doc/Macaulay2/NumericalGodeaux/html/_precomputed__Hyperelliptic__Locus}{precomputedHyperellipticLocus}).

\begin{theorem}\label{hypLocus} $V_{hyp}\subset \PP^{11}$ is birational to a product of a Hirzebruch surface $F=\PP(\sO_{\PP^1}\oplus \sO_{\PP^1}(2))$  with 3 copies of $\PP^1$.
In terms of homogeneous coordinates $v_0,\ldots,z_1$ on the product, where $v_0$ corresponds to the section of $H^0(F,\sO_F(1))\cong H^0( \sO_{\PP^1}\oplus \sO_{\PP^1}(2))$ obtained from the first summand and $v_1$ corresponds to the section obtained from the second summand, $w_0,w_1$ denote pull backs of homogeneous coordinates of the base of $F$ and similarly $x_0,\ldots, z_1$
homogeneous coordinates on the three $\PP^1$ factors, we obtain the following: The birational map
 $$\varphi: \PP(\sO_{\PP^1}\oplus \sO_{\PP^1}(2))\times \PP^1 \times \PP^1 \times \PP^1 \dasharrow V_{hyp}\subset \PP^{11}$$ 
is given by the transpose of the $12 \times 1$ matrix
$$
\begin{pmatrix}
a_{3,2}\cr a_{3,1} \cr a_{3,0} \cr a_{2,3} \cr a_{2,1} \cr a_{2,0} \cr
a_{1,3} \cr a_{1,2} \cr a_{1,0} \cr a_{0,3} \cr a_{0,2} \cr a_{0,1} 
\end{pmatrix}=
\begin{pmatrix}
    v_0^2  (x_1w_1-x_0w_0)   x_1  (x_1+x_0)  y_0  y_1  z_1^2   \cr
    v_0^2 (x_1  w_1-x_0  w_0)x_0  (x_1+x_0 ) y_0  y_1  z_0^2 \cr
    -v_1^2  w_0^2  w_1^2  (w_1+w_0)  x_0  x_1  (x_1+x_0)  y_0  y_1  z_1^2   \cr
    -v_0^2 (x_1  w_1-x_0  w_0)  x_0  x_1  y_0^2  z_1^2                           \cr
     -v_0^2 (x_1  w_1-x_0  w_0)   x_0  (x_1+x_0)  y_1^2  z_0^2                      \cr
      v_1^2  w_0^2  w_1  (w_1+w_0)^2  x_0  x_1  (x_1+x_0)  y_1^2  z_1^2           \cr
      -v_0^2 (x_1  w_1-x_0  w_0)  x_0  x_1  y_0^2  z_0  z_1                        \cr
       v_0^2 (x_1  w_1-x_0  w_0)  x_1  (x_1+x_0)  y_1^2  z_0  z_1                      \cr
      -v_1^2  w_0  w_1^2  (w_1+w_0)^2  x_0  x_1  (x_1+x_0)  y_1^2  z_0  z_1    \cr
      -v_0 v_1  w_0  w_1 (x_1w_1-x_0w_0) x_0  x_1  y_0^2  z_1^2            \cr
      v_0 v_1  w_0  (w_1+w_0)  (x_1  w_1-x_0  w_0)x_1  (x_1+x_0)  y_1^2  z_1^2        \cr
      -v_0 v_1  w_1  (w_1+w_0)  (x_1w_1-x_0w_0) x_0  (x_1+x_0)  y_1^2  z_0^2    \cr
\end{pmatrix}.
$$\end{theorem}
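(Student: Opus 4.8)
The plan is to prove that both varieties are irreducible of dimension $5$ and then to verify that $\varphi$ is a well-defined rational map whose image lies in $V_{hyp}$ and which is generically injective; an explicit inverse will simultaneously yield dominance and degree one. The source $\mathcal S:=F\times\PP^1\times\PP^1\times\PP^1$ is irreducible of dimension $2+1+1+1=5$, while $V_{hyp}=V(J_{hyp})$ has codimension $6$, hence dimension $5$, and it is the \emph{unique} $5$-dimensional component of $I_4(a)+I(Q)$: the other four (the degree $2$ loci encountered in the proof of Theorem~\ref{thm_hyplocusminors}) have dimension $4$. First I would check that $\varphi$ is well defined. Assigning to the Cox coordinates the multidegrees $\deg v_0=(0,1)$, $\deg v_1=(-2,1)$, $\deg w_i=(1,0)$ on $F$ and the evident degrees on the three $\PP^1$ factors, a short computation shows that every one of the twelve entries is homogeneous of the same multidegree, namely $(1,2)$ in $\Pic(F)=\ZZ^2$ and $(3,2,2)$ in the three $\PP^1$ factors. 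Thus the twelve expressions are sections of $\sO_F(1,2)\boxtimes\sO_{\PP^1}(3)\boxtimes\sO_{\PP^1}(2)\boxtimes\sO_{\PP^1}(2)$, and $\varphi$ is a rational map $\mathcal S\dashrightarrow\PP^{11}$.

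Next I would show $\im\varphi\subseteq V_{hyp}$. Rather than substituting into all generators of $J_{hyp}$, it is cleaner to substitute the twelve expressions into the four Pfaffians $q_0,\dots,q_3$ and into the $4\times 4$ minors of the $a$-matrix and to check that these vanish identically. This is a finite polynomial identity, which I would confirm with \Mac2. It gives $\im\varphi\subseteq V(I_4(a))\cap Q=V(I_4(a)+I(Q))$. Since $\mathcal S$ is irreducible, $\overline{\im\varphi}$ is irreducible, so once it is shown to be $5$-dimensional it must coincide with the unique $5$-dimensional component $V_{hyp}$.

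The heart of the argument is then to establish birationality by writing down an explicit rational inverse, reconstructing a general point of $\mathcal S$ from its image. Here the product structure of the entries is decisive: forming suitable ratios cancels the common factors $(x_1w_1-x_0w_0)$, $(x_1+x_0)$ and $(w_1+w_0)$ and isolates each coordinate in turn. For instance $[z_0:z_1]=[a_{1,3}:a_{2,3}]$; the quotient $a_{2,3}a_{2,1}/(a_{3,2}a_{3,1})$ equals $x_0/(x_1+x_0)$ and hence yields $[x_0:x_1]$; then $a_{3,2}/a_{1,2}=(y_0/y_1)(z_1/z_0)$ yields $[y_0:y_1]$; next $-a_{0,2}/a_{0,1}=(w_0x_1z_1^2)/(w_1x_0z_0^2)$ yields $[w_0:w_1]$; and finally $a_{0,3}/a_{2,3}=v_1w_0w_1/v_0$ recovers the remaining fiber coordinate of the point on $F$ (note that $v_1w_0w_1$ and $v_0$ both have degree $(0,1)$, so this is a well-defined pencil on $F$). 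Since these ratios reconstruct a general point of $\mathcal S$, the map $\varphi$ is generically injective; in particular $\dim\overline{\im\varphi}=5$, and together with the previous paragraph $\varphi$ is birational onto $\overline{\im\varphi}=V_{hyp}$.

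I expect the main obstacle to lie less in the formal steps than in their ingredients. The delicate points are: confirming, via the primary decomposition of $I_4(a)+I(Q)$ already recorded, that $V_{hyp}$ is genuinely the only component of dimension $5$, so that the dimension argument pins down the image; and organizing the ratio computation of the inverse so that every coordinate is recovered consistently and all common factors cancel---this is mechanical but error-prone, and is where I would lean on \Mac2. The parametrization itself, presumably produced by a liaison or adjunction analysis of the codimension $6$ ideal $J_{hyp}$, is the creative input that makes the entire verification possible.
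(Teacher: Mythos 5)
Your proposal is correct and follows the same overall template as the paper's proof --- verify containment of the image, verify dominance, verify degree one --- but it executes the individual steps differently and more explicitly. For containment, the paper substitutes the parametrization directly into the generators of $J_{hyp}$ (a \Mac2\ computation), whereas you substitute into $I_4(a)+I(Q)$ and then invoke irreducibility of the source together with the fact, recorded in the proof of Theorem \ref{thm_hyplocusminors} and in Section \ref{sec_prelim}, that the fifth component is the only one of dimension $5$ (the four degree-$2$ components are cut out by six linear forms and a quadric, hence have dimension $4$); this is valid and has the mild advantage of not needing the explicit generators of $J_{hyp}$. For dominance and degree one, the paper computes the rank of the Jacobian (to get $6$, the dimension of the affine cone) and then checks computationally that the preimage of the image of a general point is that point; you instead exhibit an explicit rational inverse by forming ratios of the $a_{i,j}$, and your formulas check out against the displayed parametrization: $a_{1,3}/a_{2,3}=z_0/z_1$, $a_{2,3}a_{2,1}/(a_{3,2}a_{3,1})=x_0/(x_1+x_0)$, $a_{3,2}/a_{1,2}=y_0z_1/(y_1z_0)$, $-a_{0,2}/a_{0,1}=w_0x_1z_1^2/(w_1x_0z_0^2)$ and $a_{0,3}/a_{2,3}=v_1w_0w_1/v_0$, the last being a well-defined rational function on $F$ since $\deg(v_1w_0w_1)=\deg v_0$ in the Cox ring of $F$. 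This single construction yields generic injectivity, hence $\dim\overline{\im\varphi}=5$, hence (by the component argument) dominance onto $V_{hyp}$, and in characteristic $0$ birationality. The result is a proof that is more self-contained and hand-checkable than the paper's; the only computational inputs you still rely on are the vanishing of $I_4(a)+I(Q)$ on the image and the primary decomposition of that ideal, both of which the paper already records.
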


\begin{proof} To check that $\varphi(\PP(\sO_{\PP^1}\oplus \sO_{\PP^1}(2))\times \PP^1 \times \PP^1 \times \PP^1) \subset V(J_{hyp})$ is an easy \Mac2  computation. To check that $\varphi$ is dominant it remains to check that the rank of the Jacobian matrix of $\varphi$ coincides with the dimension of the affine cone over $V_{hyp}$, which is $6$. To prove that the map is birational it suffices to verify that the preimage of the image of a general point in the product, coincides with the point up to parts contained in the base loci, another easy \Mac2 computation which we documented in the file 
\href{https://www.math.uni-sb.de/ag/schreyer/images/data/computeralgebra/M2/doc/Macaulay2/NumericalGodeaux/html/_verify__Thm__Hyp__Locus}{verifyThmHypLocus}.
\end{proof}

Perhaps more interesting than these computations is the way we found the parametrization. We describe our approach next.
The \Mac2 details can be obtained with our function \href{https://www.math.uni-sb.de/ag/schreyer/images/data/computeralgebra/M2/doc/Macaulay2/NumericalGodeaux/html/_compute__Parametrization__Of__Hyp__Locus}{computeParametrizationOfHypLocus}.

From the length of the resolution we see that $J_{hyp}$ is (arithmetically) Cohen-Macaulay. The dualizing sheaf of 
$V_{hyp}$ has a linear presentation matrix
$$0 \leftarrow \omega_{hyp} \leftarrow \sO_{\PP^{11}}^{20}(-2)\leftarrow \sO_{\PP^{11}}^{92}(-3)$$
which is up to a twist  the transpose $\psi$ of the last matrix in the free resolution of $J_{hyp}$. However, $\omega_{hyp}$ is not an invertible sheaf, since $V_{hyp}$ has non-Gorenstein singularities.
The first step towards the computation of the parametrization is to compute the image under the rational map 
$$V_{hyp} \dasharrow \PP^{19}$$
defined by $|\omega_{hyp}(2)|$.  The graph of this map is contained in
the scheme defined  by 
$$\begin{pmatrix} b_0 \ldots b_{19}\end{pmatrix}\psi$$
and the differentiation with respect to the $a^{(i)}_{ij}$'s gives a $12\times 92$-presentation matrix
$$0 \leftarrow \sL \leftarrow \sO_{\PP^{19}}^{12}\leftarrow \sO_{\PP^{19}}^{92}(-1).$$
The annihilator of $\sL$ is an ideal $J_1$ of degree $180$ and codimension $14$ with Betti table
$$\begin{matrix}
      &0&1\\\text{total:}&1&247\\\text{0:}&1&\text{.}\\\text{1:}&\text{.}&19\\
     \text{2:}&\text{.}&228\\\end{matrix} \qquad  \hbox{ and }  \qquad \begin{matrix}
      &0&1\\\text{total:}&1&81\\\text{0:}&1&\text{.}\\\text{1:}&\text{.}&37\\
     \text{2:}&\text{.}&44\\\end{matrix}$$
is the Betti table of the ideal $J_{1s}$ of degree $168$ and codimension $14$ obtained by saturating $J_1$ with respect to 
$\prod_{j=0}^{19} b_i$.

\begin{remark} 
On first glance, we were surprised that $J_1$ was not prime. The discovery of the other component is in principal possible via primary decomposition. We discovered them by saturating in $b_0$ by good luck. The residual part $J_{residual}=J_1:J_{1s}$
decomposes into $6$ components of degree $2$. 
The explanation of the additional components is that these are contributions from the non-Gorenstein loci of $J_{hyp}$.
\end{remark}

The next lucky discovery was that the linear strand of the resolution $J_{1s}$  has Betti table
 $$\begin{matrix}
      &0&1&2&3&4&5\\\text{total:}&1&37&84&54&24&5\\\text{0:}&1&\text{.}&\text
      {.}&\text{.}&\text{.}&\text{.}\\\text{1:}&\text{.}&37&84&54&24&5\\
 \end{matrix}.$$ 
 The cokernel of the last differential in the linear strand transposed and twisted, $\sO_{\PP^{19}}^5 \leftarrow \sO_{\PP^{19}}^{24}(-1)$,
 is supported on a rational normal scroll of degree $6$.
 Indeed, $V_1=V(J_{1s})$ is contained in a rational normal scroll which is the cone over $\PP^5\times \PP^1 \subset \PP^{11}$ with vertex a $\PP^7 \subset \PP^{19}$. Using a scrollar syzygy \cite{Bothmer}, we can compute the $2 \times 6$-matrix of linear forms defining the scroll in $\PP^{11}$, and hence an isomorphism of the scroll with  $\PP^5\times \PP^1$.
 
 The projection of $V(J_{1s})$ from the vertex into $\PP^{11}$ is defined by an ideal $J_2$   whose resolution has Betti numbers
 $$\begin{matrix}
       &0&1&2&3&4&5&6&7\\\text{total:}&1&19&55&97&99&56&20&3\\\text{0:}&1&\text{.}&\text{.}&\text{.}&\text{.}&\text{.}&\text{.}&\text{.}\\\text{1:}&\text{.}&19&52&45&24&5&\text{.}&\text{.}\\\text{2:}&\text{.}&\text{.}&3&
       52&75&36&8&\text{.}\\\text{3:}&\text{.}&\text{.}&\text{.}&\text{.}&\text{.}&15&12&3
 \end{matrix}.$$
 Using Cox coordinates $\QQ[c_0,\ldots,c_5,w_0,w_1]$ we obtain that $V_2=V(J_2)$ is a complete intersection of two quadric bundles of class $2H-R$ on $\PP^5\times \PP^1$, where $H$ and $R$ denote the hyperplane class and the ruling in
 $\Pic (\PP^5\times \PP^1)$. Following \cite{Schreyer86}, the resolution of $J_2$ can be obtained from the exact sequence
 $$ 0 \leftarrow \sO_{V(J_2)}  \leftarrow \sO_{\PP^5\times \PP^1} \leftarrow \sO_{\PP^5\times \PP^1}(-2H+R) \leftarrow \sO_{\PP^5\times \PP^1}^2(-4H+2R) \leftarrow 0$$
 via an iterated mapping cone 
 $$ [[\sC^0  \leftarrow \sC^1(-2)\oplus \sC^1(-2)] \leftarrow \sC^2(-4)] $$
 over Buchsbaum-Eisenbud complexes $\sC^i$ associated to the $2\times 6$-matrix defining the scroll
 with Betti tables
 $$\begin{matrix}
       &0&1&2&3&4&5\\
 \text{0:}&1&\text{.}&\text{.}&\text{.}&\text{.}&\text{.}\\
 \text{1:}&\text{.}&15&40&45&24&5\\
 \text{2:}&\text{.}&\text{.}&\text{.}&\text{.}&\text{.}&\text{.}\\
\text{3:}&\text{.}&\text{.}&\text{.}&\text{.}&\text{.}&\text{.}\\
 \end{matrix}
  \;+\;
 2 \cdot \begin{matrix}
       &1&2&3&4&5&6\\
 \text{0:}&\text{.}&\text{.}&\text{.}&\text{.}&\text{.}&\text{.}\\
 \text{1:}&2& 6&\text{.}&\text{.}&\text{.}&\text{.}\\
 \text{2:}&\text{.}&\text{.}&20&30&18&4\\
\text{3:}&\text{.}&\text{.}&\text{.}&\text{.}&\text{.}&\text{.}\\
 \end{matrix} 
  \;+\;\begin{matrix}&2&3&4&5&6&7\\
 \text{0:}&\text{.}&\text{.}&\text{.}&\text{.}&\text{.}&\text{.}\\
  \text{1:}&\text{.}&\text{.}&\text{.}&\text{.}&\text{.}&\text{.}\\
 \text{2:}&3&12&15&\text{.}&\text{.}&\text{.}\\
 \text{3:}&\text{.}&\text{.}&\text{.}&15&12&3\\
 \end{matrix},
$$ 
which gives the table above.

\begin{proposition}\label{paraV2} $V_2$ is birational to $V_2'=\PP(\sO_{\PP^1}\oplus \sO_{\PP^1}(2))\times \PP^1 \times \PP^1$.
\end{proposition}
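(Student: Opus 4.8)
The plan is to extract a quadric-bundle structure from $V_2$ and thereby reduce the statement to a rationality assertion. Working in the Cox coordinates $\QQ[c_0,\ldots,c_5,w_0,w_1]$, the two generators of $J_2$ are the bidegree $(2,1)$ forms defining the two quadric bundles of class $2H-R$; writing each as $w_0A_i(c)+w_1B_i(c)$ with $A_i,B_i$ quadrics in the $c$'s, the projection $\rho\colon V_2\to \PP^1$ onto the base $\PP^1=\Proj\QQ[w_0,w_1]$ has generic fibre $X_\eta$ equal to the intersection of two quadrics in $\PP^5$ over the function field $\Bbbk(w)$, i.e. a del Pezzo threefold of degree $4$. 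Since the target $V_2'=\PP(\sO_{\PP^1}\oplus \sO_{\PP^1}(2))\times \PP^1\times \PP^1$ is visibly a rational fourfold, and any two rational varieties of the same dimension are birational (they share the function field $\Bbbk(t_1,\ldots,t_4)$), it suffices to prove that $V_2$ is rational of dimension $4$; the explicit birational map is then reconstructed afterwards for use in Theorem \ref{hypLocus}.

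To rationalise $V_2$ I would invoke the classical rationality of $X_\eta$ by projection from a line. First I would produce a relative line $\Lambda\subset V_2$ over $\PP^1$, that is a rank-two subbundle $\mathcal S\subset \sO_{\PP^1}^6$ with $\PP(\mathcal S_w)=\Lambda_w\subset X_w$ a line in every fibre. Given $\Lambda$, projection of $\PP^5$ away from $\Lambda_w$ identifies the $\PP^3$ of planes through $\Lambda_w$ with $\PP(\sO^6_{\PP^1}/\mathcal S)$, and the standard computation (a general plane $\Pi\supset\Lambda_w$ meets each quadric in $\Lambda_w$ together with a residual line, and the two residual lines meet in a single residual point of $X_w$) shows that $\rho$ becomes birational to the $\PP^3$-bundle $\PP(\sO^6_{\PP^1}/\mathcal S)\to \PP^1$. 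As a projectivised vector bundle over $\PP^1$ this splits, hence is birational to $\PP^3\times \PP^1$, so $V_2$ is rational of dimension $4$ and therefore birational to $V_2'$.

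To obtain the \emph{explicit} parametrisation that is needed downstream, I would then coordinatise this rationalisation so as to display the three $\PP^1$-fibrations over the base separately: the base $\PP^1$ of the Hirzebruch surface maps to $\Proj\QQ[w_0,w_1]$, while the fibre $\PP^1$ of $\PP(\sO_{\PP^1}\oplus \sO_{\PP^1}(2))$ together with the two $\PP^1$-factors coordinatise $\PP(\sO^6_{\PP^1}/\mathcal S)$, the twist $\sO_{\PP^1}(2)$ merely recording the splitting type of the relevant summand of $\sO^6_{\PP^1}/\mathcal S$ along $\PP^1$. Exactly as in the proof of Theorem \ref{hypLocus}, one then verifies by a \Mac2 computation that the resulting rational map $V_2'\dasharrow V_2$ is dominant (the Jacobian has rank $4$) and birational (the generic fibre is a reduced point, or one writes down the inverse).

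The hard part will be the production of the relative line $\Lambda$. For a \emph{smooth} intersection of two quadrics in $\PP^5$ the Fano scheme of lines is an abelian surface, so a section defined over $\Bbbk(w)$ is by no means automatic; however the fibres $X_w$ occurring here are very special—they are singular and carry distinguished lines inherited from the scrollar structure through which $V_2$ was constructed—so such a relative line is available. The remaining task, namely pinning down the precise Hirzebruch type $\PP(\sO_{\PP^1}\oplus \sO_{\PP^1}(2))$ rather than a generic $\PP^3$-bundle, is then the bookkeeping of splitting types along $\PP^1$, which the \Mac2 verification makes rigorous.
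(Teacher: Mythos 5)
Your opening reduction (it suffices to show $V_2$ is rational of dimension $4$) is logically sound, but the proof has a genuine gap at exactly the point you flag yourself: you never produce the relative line $\Lambda$, and the structural fact that would make it available is absent from your argument. The paper's proof begins by observing, by inspection of the Cox-ring generators, that the two bidegree-$(2,1)$ equations are quadrics of \emph{relative rank $4$}, i.e.\ $2\times 2$ determinants
$\det\begin{pmatrix} c_0& c_2\\ -c_4w_1&c_5(w_0-w_1)\end{pmatrix}$ and $\det\begin{pmatrix} c_1&c_2\\ -c_4w_1&c_3w_0\end{pmatrix}$.
This is the crucial input. Without it your generic fibre could be a smooth intersection of two quadrics in $\PP^5$, whose surface of lines is a torsor under an abelian surface and need not have a $\Bbbk(w)$-point, so ``such a relative line is available'' remains an assertion rather than a proof. (Once both quadrics are known to have rank $4$, each carries two pencils of $\PP^3$'s defined over $\QQ(w_0,w_1)$, and a member of a ruling of one meets a member of a ruling of the other in a line lying on the fibre --- so your $\Lambda$ does exist, but exhibiting it requires precisely the determinantal structure your write-up never establishes.)

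The paper then takes a different and more direct route that bypasses projection from a line altogether: it introduces kernel coordinates $(y_0:y_1)$ and $(z_0:z_1)$ for the two $2\times 2$ matrices, which converts the two quadratic equations into four equations \emph{linear} in $c_0,\ldots,c_5$ with coefficients in the Cox coordinates of $\PP^1\times\PP^1\times\PP^1$; the kernel of the resulting $4\times 6$ matrix $m$ is the rank-$2$ bundle $\sO(0,-1,-1)\oplus\sO(-2,-1,-1)$, and projectivising yields in one step both the identification of $V_2'$ with $\PP(\sO_{\PP^1}\oplus\sO_{\PP^1}(2))\times\PP^1\times\PP^1$ (the $\sO_{\PP^1}(2)$ twist is read off from the kernel bundle, not recovered by a posteriori bookkeeping) and the explicit parametrisation $(v_0,v_1)\mapsto (v_0,v_1)\,n^t\otimes(w_0,w_1)$ that the subsequent proposition on $V_1$ and $V_{hyp}$ actually needs. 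Your plan defers both the existence of $\Lambda$ and the explicit map to unspecified computation, which is where the real content of the proposition lies.
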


\begin{proof} By inspection we find that in terms of the Cox ring $\QQ[c_0,\ldots,c_5,w_0,w_1]$ of $\PP^5\times \PP^1$ the variety $V_2$ is defined by two relative rank $4$ quadrics
$$
\det \begin{pmatrix} c_0& c_2\cr
-c_4w_1&c_5(w_0-w_1)\cr
\end{pmatrix} \hbox{ and }
\det \begin{pmatrix} c_1&c_2\cr
-c_4w_1&c_3w_0 \cr
\end{pmatrix} .
$$
Regarding
$$
\begin{pmatrix} c_0& c_2\cr
-c_4w_1&c_5(w_0-w_1)\cr
\end{pmatrix} \begin{pmatrix} y_0\cr
y_1 \cr
\end{pmatrix} =0\hbox{ and }
\begin{pmatrix}
z_0\cr 
z_1\cr
\end{pmatrix}=0
$$
as a linear system for $c_0,\ldots,c_5$ with coefficients in Cox coordinates $(w_0,w_1,y_0,y_1,z_0,z_1)$ of 
$\PP^1 \times \PP^1 \times \PP^1$ we find a $4 \times 6$-matrix $m$ with a rank $2$ kernel

$$ \sO(0,1,1)^2\oplus \sO(1,1,1)^2 \overset{m}{\longleftarrow} \sO^6 \overset{n}{\longleftarrow} \sO(0,-1,-1)\oplus \sO(-2,-1,-1).$$ 
Thus  
$$V_2'=\PP(\sO_{\PP^1}\oplus \sO_{\PP^1}(2))\times \PP^1 \times \PP^1 \dasharrow V_2 \subset \PP^5 \times \PP^1 \subset \PP^{11}$$
defined by 
$$
(v_0,v_1,w_0,w_1,y_0,y_1,z_0,z_1) \mapsto (v_0,v_1) n^t \otimes (w_0,w_1)
$$
gives  a rational parametrization of $V_2$. Note that the rational map is defined by forms of multidegree $(1,3,1,1)$, where
$\deg v_0=(1,2,0,0), \deg v_1=(1,0,0,0)$ and $ \deg w_0=\deg w_1=(0,1,0,0)$.
\end{proof}

\begin{proposition} 
$V_1$ is birational to 
 a degree $4$ rational normal curve fibration $\widetilde V_1' \to V_2'$ in a $\PP^4$-bundle $\PP(\sT)$ over
 $V_2'=\PP(\sO_{\PP^1}\oplus \sO_{\PP^1}(2))\times \PP^1 \times \PP^1$. The varieties $V_1'$ and $V_{hyp}$ are birational to $\PP(\sO_{\PP^1}\oplus \sO_{\PP^1}(2))\times \PP^1 \times \PP^1\times \PP^1$.
\end{proposition}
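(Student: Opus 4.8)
The plan is to read off the fibration structure from the projection $\pi\colon V_1\dasharrow V_2$ from the vertex $\PP^{7}$ of the scroll $S$, and then to trivialise the resulting curve fibration birationally. Recall that $V_1=V(J_{1s})\subset\PP^{19}$ is contained in $S$, the cone over $\PP^5\times\PP^1\subset\PP^{11}$ with vertex $\PP^{7}$, so $\pi$ is defined away from $\PP^{7}$, collapses $S$ onto its base $\PP^5\times\PP^1$, and carries $V_1$ onto $V_2=V(J_2)$. Over a point $b\in V_2$ the fibre of $\pi$ is $V_1\cap F_b$, where $F_b\cong\PP^{8}$ is the ruling of $S$ over $b$; since $\dim V_1=5$ and $\dim V_2=4$, this fibre is a curve. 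First I would pull everything back along the birational parametrisation $V_2'\dasharrow V_2$ of Proposition~\ref{paraV2}, setting $V_1'=V_1\times_{V_2}V_2'$ (birational to $V_1$) and writing $\widetilde V_1'\to V_2'$ for the induced curve fibration.

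The next step is to pin down the fibre together with the linear spaces it spans. The linear strand of the resolution of $J_{1s}$ supplies, after transposing and twisting, the presentation $\sO_{\PP^{19}}^5\leftarrow\sO_{\PP^{19}}^{24}(-1)$ supported on $S$, and the associated scrollar syzygy in the sense of \cite{Bothmer} is exactly the tool that lets one control, over each $b$, the span $\langle V_1\cap F_b\rangle$. A direct \Mac2 computation of a general fibre then shows that $V_1\cap F_b$ spans a $\PP^{4}\subset F_b$ and is a rational normal curve of degree $4$ inside it. As $b$ varies these $\PP^4$'s fit together (after pull-back to the rational base $V_2'$) into a $\PP^4$-bundle $\PP(\sT)$ with $\sT$ of rank $5$, and $\widetilde V_1'\to V_2'$ becomes a degree $4$ rational normal curve fibration inside $\PP(\sT)$. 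This establishes the first assertion.

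It then remains to trivialise this fibration birationally. Abstractly each fibre is a $\PP^1$, and the relative degree-$4$ embedding identifies $\widetilde V_1'$ with $\PP(\cE)$ for a rank $2$ bundle $\cE$ on $V_2'$, with $\sT\cong\Sym^4\cE$ up to twist and with the inclusion $\PP(\cE)\hookrightarrow\PP(\Sym^4\cE)=\PP(\sT)$ given fibrewise by the $4$-uple Veronese map. Since $V_2'$ is rational and $\cE$ is an honest vector bundle, any sub-line-bundle of $\cE$ furnishes a rational section, so $\PP(\cE)$ is birational to $V_2'\times\PP^1$. Combining this with $V_2'\cong\PP(\sO_{\PP^1}\oplus\sO_{\PP^1}(2))\times\PP^1\times\PP^1$ from Proposition~\ref{paraV2} yields $\widetilde V_1'\sim V_2'\times\PP^1\cong\PP(\sO_{\PP^1}\oplus\sO_{\PP^1}(2))\times\PP^1\times\PP^1\times\PP^1$. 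Finally, $V_1$ is birational to $V_{hyp}$ because the rational map $V_{hyp}\dasharrow\PP^{19}$ defined by $|\omega_{hyp}(2)|$ is birational onto the component $V_1=V(J_{1s})$ (the extraneous factors being the non-Gorenstein contributions $J_{residual}$ removed by saturation); together with $V_1\sim V_1'\sim\widetilde V_1'$ this gives the statement for both $V_1'$ and $V_{hyp}$.

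The main obstacle I expect is the trivialisation step. Because the relative degree $4$ is even, the existence of a rational section of $\widetilde V_1'\to V_2'$ is not forced by an odd-degree parity argument and could in principle be obstructed by a Brauer class. What makes it work is precisely that the fibration arises as the projectivisation of a genuine rank $2$ vector bundle $\cE$ (equivalently, that $\sT$ is recognised as a symmetric fourth power), so that a sub-line-bundle of $\cE$ supplies the section. I would therefore carry out the identification $\widetilde V_1'\cong\PP(\cE)$ and the computation of the degree $4$ of a general fibre by an explicit \Mac2 calculation, checking at the same time that the resulting composite reproduces the parametrisation of $V_{hyp}$ recorded in Theorem~\ref{hypLocus}.
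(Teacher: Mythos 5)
Your proposal follows essentially the same route as the paper: project $V_1$ from the vertex $\PP^7$ of the scroll onto $V_2$, pull back along the parametrization of $V_2'$, identify the general fibre as a rational normal quartic spanning the fibre of a $\PP^4$-bundle $\PP(\sT)$, and then parametrize the resulting $\PP^1$-fibration to produce the extra factor. The only substantive difference is the trivialisation step: where you invoke an abstract identification $\widetilde V_1'\cong\PP(\cE)$ with $\sT\cong\Sym^4\cE$ (correctly flagging, and then having to rule out, a possible Brauer obstruction), the paper gets the section explicitly by computing via a relative scrollar syzygy the homogeneous $2\times 4$ matrix whose $2\times2$ minors are the $6$ relative quadrics cutting out the fibration, and it certifies $V_{hyp}\sim V_1'$ by substituting the parametrization into the $12\times 92$ presentation matrix of $\sL$ and checking that it has rank $11$.
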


\begin{proof} Let $\widetilde \PP^{19}=\PP(\sO_{\PP^{11}}\oplus \sO_{\PP^{11}}^8(1)) \to \PP^{19}$ denote the blow-up of the projection center of $\PP^{19} \dasharrow \PP^{11}$. Taking  the pullback of this bundle along the rational map
$V_2'= \PP(\sO_{\PP^1}\oplus \sO_{\PP^1}(2))\times \PP^1 \times \PP^1 \dasharrow V_2  \subset \PP^{11}$
from Proposition \ref{paraV2} we obtain the bundle $\PP(\sU) \to V_2'$ with
$$ \sU=\sO_{V_2'} \oplus 8\sO_{V_2'}(1,3,1,1).$$
Let 
$$\psi': \QQ[b_0,\ldots,b_{19}] \to \QQ[u_0,\ldots,u_8,v_0,v_1,w_0,w_1,y_0,y_1,z_0,z_1] $$
be the corresponding ring homomorphism. We will solve the equations defined by $J_3=\psi'(J_{1s})$ in two steps.
Saturating  $J_3$ with respect to $u_0$ yields $12$ equations which are linear in the $u$'s. Their solution space defines a  $\PP^4$-bundle $\PP(\sT) \to V_2'$.  
\begin{center}
\begin{tikzpicture}
	\matrix(m)[matrix of math nodes,
		row sep=2.0em, column sep=2.9em,
		text height=1.5ex, text depth=0.25ex]
	       {\PP^{19}& \widetilde \PP^{19} &\widetilde \PP^{19}|_{V_2} &                        & \PP(\sU) & \PP(\sT) \\
	                     &                                &                                           &  \widetilde V_1 &               & V_1'  \\
	                     &\PP^{11}                  & V_2                                    &                         & V_2'       &         \\  
	       };
	\path[-stealth]
	(m-1-2) edge (m-1-1)
	(m-1-2) edge (m-3-2)
	(m-1-3) edge (m-3-3)
	(m-2-4) edge (m-3-3)
	(m-1-5) edge (m-3-5)
	(m-1-6) edge (m-3-5)
	(m-2-6) edge (m-3-5);
	\path[>=stealth,left hook->]
	(m-2-4) edge (m-1-3)
	(m-1-3) edge (m-1-2)
	(m-2-4) edge (m-1-3)
	(m-3-3) edge (m-3-2)
	(m-1-6) edge (m-1-5)
	(m-2-6) edge (m-1-6)
	;
	\path[-stealth]
	(m-1-5) edge[dashed] node[above]{$\psi$} (m-1-3)
	(m-3-5) edge[dashed] (m-3-3)
	(m-2-6) edge[dashed] (m-2-4);
  	\end{tikzpicture} 
\end{center}
The saturation of the image $J_4$ of $J_3$ leads to an ideal defined by $6$ relative quadrics, which we identify with  the minors of a homogeneous $2\times 4$-matrix.  We compute the $2\times 4$-matrix by using a relative scrollar syzygy \cite{Bothmer}. Luckily, the Gr\"obner basis computation in \Mac2 uses a relative scrollar syzygy as one of the basis elements of the third syzygy module. Introducing another $\PP^1$ factor with coordinates $(x_0,x_1)$ leads to a parametrization
$$ V_1'=\PP(\sO_{\PP^1}\oplus \sO_{\PP^1}(2))\times \PP^1 \times \PP^1\times \PP^1 \dasharrow \widetilde V_1 \subset \widetilde \PP^{19}.$$
Finally, substituting this parametrization into the $12\times 92$ presentation matrix of $\sL$ yields a rank $11$ matrix over the Cox ring of $V_1'$
and the syzygy of the transposed matrix yields the final parametrization of $V_{hyp}$ from Theorem \ref{hypLocus}.  
\end{proof}

\section{Torsion surfaces}
Let $X$ be a  numerical Godeaux surface with $\Tors X = \ZZ/5\ZZ$ or $\Tors X = \ZZ/3\ZZ$, and let $\tau_i, \tau_j  \in \Pic(X)$ be two different torsion elements with $\tau_i = - \tau_j$. 
As $h^0(K_X+\tau) = 1$ for any non-trivial torsion element $\tau$ in $\Pic(X)$, we can choose effective divisors $D_i \in |K_X+\tau_i|$ and  $D_j \in |K_X+\tau_j|$. 
 Then,  
 \begin{equation}\label{eq_redfib}
 C_{i,j} = D_i + D_{j} \in |2K_X|
 \end{equation}
 and $D_i$ and $D_j$ intersect in exactly one point which is a base point of $|3K_X|$.  Thus, for $T = \ZZ/3\ZZ$ we obtain a special reducible bicanonical fiber $D_1+D_2$ with $\tau_2 = - \tau_1$, whereas for $T = \ZZ/5\ZZ$ we obtain two special reducible bicanonical fibers $D_1+D_4$  and $D_2+D_3$ with $\tau_4= - \tau_1$ and $\tau_3= - \tau_2$. 
 
In the last case, Reid showed that for any $i \neq j$, $D_i$ and $D_j$ intersect in a unique point and that for three different $i,j,k$,
any two points of intersection are distinct (cf. \cite{ReidGodeaux78}, Lemma 0.1 and 0.2).
Hence, denoting by $P_{i,j}$ the intersection points of $D_i$ and $D_j$, then the two bicanonical divisors $C_{1,4}$ and $C_{2,3}$ intersect in four distinct points 
$\{P_{1,2},P_{1,3},P_{2,4},P_{3,4}\}$ which are exactly the base points of $|2K_X|$. In particular, $|2K_X|$ has no fixed components and four different base points. 

Next, we study the restriction of the tricanonical map $\phi_3\colon X \dashrightarrow \P^3$ to a divisor $C_{i,j} = D_i+D_j$. Working on the canonical model $\canmod$ instead, we may assume that any $D_i$ is an irreducible curve with $p_a(D_i)=2$. We have $|3K_X|_{D_i}| = |K_{D_i} + D_{j}|_{D_i}| = |K_{D_i} + Q|$ where $Q$ is a base point of $|3K_X|$. Thus, under the birational map $\phi_3$, the curve $D_i$ is mapped $2:1$ to a line. 


\subsection{$\ZZ/5\ZZ$-surfaces}\label{Z5surfaces}
These surfaces are completely classified:
 \begin{theorem}[see \cite{Miyaoka} or \cite{ReidGodeaux78})]
 	Numerical Godeaux surfaces with torsion group $\ZZ/5\ZZ$ form a unirational irreducible family of dimension 8. 
 \end{theorem}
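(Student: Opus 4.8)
The plan is to prove the theorem via the classical description of $\ZZ/5\ZZ$-Godeaux surfaces as free quotients of quintic surfaces in $\PP^3$, which is the geometric content of the cited work of Reid and Miyaoka. The starting point is the étale $\ZZ/5\ZZ$-cover $p\colon \tilde X \to X$ associated to the torsion subgroup $\Tors X \cong \ZZ/5\ZZ$. Using multiplicativity of $\chi$ and of $K^2$ under étale covers, together with Riemann--Roch on $X$ and the vanishing $h^0(X,K_X+\tau)=1$ for every nontrivial $\tau\in\Tors X$, one first computes $\chi(\sO_{\tilde X})=5$, $K_{\tilde X}^2=5$, $q(\tilde X)=0$ and $p_g(\tilde X)=4$. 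The deck group acts on $H^0(\tilde X,K_{\tilde X})$ through $p_*\omega_{\tilde X}=\bigoplus_\tau(\omega_X\otimes\tau)$, and since each of the four nontrivial characters contributes a one-dimensional summand while the trivial one contributes $0$, the four weights occurring are exactly the four pairwise distinct nontrivial characters of $\ZZ/5\ZZ$. I would then show that $K_{\tilde X}$ is very ample, embedding $\tilde X$ as a smooth quintic $Y\subset\PP^3=\PP(H^0(\tilde X,K_{\tilde X}))$ on which $\ZZ/5\ZZ$ acts diagonally with these four weights.

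For the parametrizing family I would consider the linear system of $\ZZ/5\ZZ$-invariant quintics. A direct monomial count shows there are exactly $12$ invariant quintic monomials: the four pure powers $x_0^5,x_1^5,x_2^5,x_3^5$ together with eight mixed ones; hence invariant quintics form a $\PP^{11}$. Because the four $x_i^5$ already appear, this system is base-point free, so by Bertini a general member $Y$ is smooth; and since the only fixed points of the action on $\PP^3$ are the four coordinate points, all of which lie off a general $Y$, the induced $\ZZ/5\ZZ$-action on $Y$ is free. Thus $X=Y/(\ZZ/5\ZZ)$ is a numerical Godeaux surface with $\Tors X\cong\ZZ/5\ZZ$, and this construction is inverse to the covering construction of the first paragraph.

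It then remains to identify the moduli. Two invariant quintics yield isomorphic Godeaux surfaces precisely when they differ by an automorphism of $\PP^3$ normalizing the $\ZZ/5\ZZ$-action. The connected part of this normalizer is the centralizer of the cyclic group, which, the four weights being distinct, is exactly the diagonal torus $(\CC^*)^3$ of dimension $3$; the finite component group accounts only for finitely many further identifications. Therefore the moduli space is birational to $\PP^{11}/(\CC^*)^3$ up to a finite group, which is irreducible of dimension $11-3=8$. Irreducibility of the family follows from irreducibility of $\PP^{11}$, and since the rational quotient map $\PP^{11}\dashrightarrow \PP^{11}/(\CC^*)^3$ is dominant from a rational variety, the family is unirational.

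I expect the main obstacle to be the very ampleness step in the first paragraph: showing cleanly that the canonical map of the étale cover $\tilde X$ realizes it as a smooth quintic in $\PP^3$, rather than, say, a $2{:}1$ map onto a surface of smaller degree. The distinctness of the four weight characters is immediate from $h^0(X,K_X+\tau)=1$, but ruling out degenerate canonical behaviour for surfaces with $p_g=4$, $q=0$, $K^2=5$ requires the finer geometric analysis of Reid. Once this canonical normal form is in place, the monomial count, the Bertini and freeness arguments, and the torus-quotient dimension count are all routine.
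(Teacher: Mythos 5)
Your proposal reconstructs exactly the classical Reid--Miyaoka argument that the paper itself only cites and then summarizes: the $12$ invariant quintic monomials forming a $\PP^{11}$, the diagonal torus $(\CC^*)^3$ accounting for the identifications (the paper's normalization of the four coefficients of $u_i^5$ to $1$, leaving $c\in\AA^8$), and the count $11-3=8$. The one point to phrase more carefully is that for non-general parameters the quintic is the \emph{canonical model} of the cover, with at most rational double points, rather than a smooth surface containing $\tilde X$ embedded; but you correctly identify that the genuinely hard step --- that the canonical map of the \'etale cover is birational onto a quintic --- is precisely what the cited references supply, so nothing is missing relative to the paper.
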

Their proof shows that any $G = \ZZ/5\ZZ$ Godeaux surface arises as a quotient $Q_5/G$ of a quintic as follows: Let $\xi$ be a primitive fifth root of unity.  The group $G$ acts on $\P^3$ via
\begin{equation*}
\beta \colon (u_1:u_2:u_3:u_4) \mapsto (\xi u_1,\xi^2 u_2,\xi^3 u_3,\xi^4u_4).\\
\end{equation*}
Consider the $G$-invariant family of quintic forms 
\begin{equation}\label{eq_quinticsurface}
\begin{aligned}
f = f_{c} & =  u_1^5+u_2^5+u_3^5+u_4^5 + c_0u_1^3u_3u_4 + c_1u_1u_2^3u_3 +c_2u_2u_3^3u_4 + c_3u_1u_2u_4^3\\
& + c_4u_1^2u_2^2u_4 + c_5u_1^2u_2u_3^2 +c_6u_2^2u_3u_4^2 +c_7u_1u_3^2u_4^2
\end{aligned}
\end{equation}
with $c = (c_0,\ldots,c_7)^{t}\in \AA^{8}$. The coefficients of  $u_i^5$ are chosen to be 1 to guarantee that $Q_5= V(f)$ does not meet the four fixed points of the action. 
The quotient $Q_5/G$ is a Godeaux surface if the parameter $c \in \AA^{8}$ is chosen such that $Q_5$ has at most rational double points as singularities. 

In \cite{Stenger18}, Section 9.1, a minimal set of algebra generators for $R(X) = R(Q_5)^G$ with $Q_5$ and $G$ as above is presented. Moreover, using this set of generators, 
we  determined a choice for the  first syzygy matrix $d_1$ of $R(X)$ depending on $c \in \AA^{8}$. 
\medskip

In Theorem \ref{Z5surf} we reconstruct the family of $\ZZ/5\ZZ$ surfaces using our approach. We deduce computationally that the locus of possible lines in $Q$  for marked numerical Godeaux surfaces with $\Tors = \ZZ/5\ZZ$ is isomorphic to an $S_4$-orbit of six $\P^1\times \P^1$'s.

First, in Theorem \ref{thm_torsionfib}  and Proposition \ref{excludedCompo} we have seen that for constructing a $\ZZ/5\ZZ$-Godeaux surface we have to choose a line in $Q$ intersecting two $\P^3$'s in $V(I_3(e)) \cap Q$ in exactly one point.
So we choose two different $\P^{3}$'s in this locus and evaluate the condition that a line through two general points is completely contained in the variety $Q$ (see our procedure \href{https://www.math.uni-sb.de/ag/schreyer/images/data/computeralgebra/M2/doc/Macaulay2/NumericalGodeaux/html/_line__Conditions__Tors__Z5.html}{lineConditionsTorsZ5}).
	The resulting zero loci $W \subset \P^3 \times \P^3$ decomposes into a union of several surfaces of type $\P^{1} \times \P^1 \subset \P^{3} \times \P^3$ and $\P^0 \times \P^2 \subset \P^3 \times \P^3$ or $\P^2 \times \P^0 \subset \P^3 \times \P^3$. We verify computationally that there are two components of $W$, both isomorphic to a $\P^{1} \times \P^1$, which give lines leading generically to numerical Godeaux surfaces with torsion group $\ZZ/5\ZZ$. We display the results in a table, using the following notation: 
	\begin{align*}
n & = \text{ number of components of $W$ of a given type} ,\\
f & = \text{ dimension of the family of lines}, \\
s &= \text{ projective dimension of the linear solution space in the second step} ,\\
t  &= \text{ total dimension of the constructed family of varieties} ,\\
J &= \text{ generating ideal of the model in $\P^1 \times \P^3$ (see Remark \ref{bitrimap})}, \\
R.C. &=  \text{ the ring condition introduced in \cite[Remark 2.3]{SS20}}
\end{align*}
	\begin{figure}[h]
	\begin{tabular}{p{0.3cm}|p{1.8cm}|p{0.4cm}|l|p{2.3cm}|p{6.00cm}}
	$n$	& family of \newline lines & $f$  $s$ $\bf{t}$ & $R.C.$ & $\#$ of gen. of $J$ of a given bidegree& comments \\ \hline 
		2 & $\P^1 \times \P^1$ 	&
		 $\! \begin{aligned}	
	 & 2\\
	& 9 \\
	& \bf{11}
	 \end{aligned}$ & 
		 true &   $\! \begin{aligned}	
		\left\{0,\,7\right\} & \Rightarrow 1\\
		\left\{1,\,2\right\} & \Rightarrow 1 \\
		\left\{1,\,5\right\} & \Rightarrow 1 \\
		\left\{2,\,3\right\} & \Rightarrow 1 
		\end{aligned}$ & 
		a general $\ZZ/5\ZZ$-Godeaux surface \\
	\hline 
	4 & $\P^1 \times \P^1$ 	&
	$\! \begin{aligned}	
	& 2\\
	& 16  \\
	& \bf{18}
	\end{aligned}$ &
	false &   $\! \begin{aligned}	
	\left\{0,\,2\right\} & \Rightarrow 3
	\end{aligned}$ & 
$J$ defines a surface in $\P^1 \times \P^3$ which is the union of three  $\P^1 \times \P^1$'s \\
	\hline 
4 \newline 4
 & $\P^0 \times \P^2$ \newline  $\P^2 \times \P^0$ 	&
	$\! \begin{aligned}	
& 2\\
& 16 \\ 
& \bf{18} & \end{aligned}$ & 
false &   $\! \begin{aligned}	
\left\{0,\,2\right\} & \Rightarrow 3
\end{aligned}$ & 
$J$ defines a surface in $\P^1 \times \P^3$ which is the union of three  $\P^1 \times \P^1$'s
	\end{tabular}
\caption{$\ZZ/5\ZZ$-surfaces}
\label{tab:Z5surfaces}
	\end{figure}
\begin{remark}\label{bitrimap}
Our Macaulay2-package \cite{SS20M2} contains procedures for various birational models of the computed surface.  Recall that the tricanonical map defines a birational map $\phi_3$ to $\P^3$. We can compute the image of the projection map $\canmod \rightarrow \Proj(S) = \P(2^2,3^4)$, the image of $\phi_3$ in $\P^3$ and the image of the product of the bi- and tricanonical map to $\P^1 \times \P^3$ which is also birational onto its image:
\begin{center}
	\begin{tikzpicture}
	\matrix (m) [matrix of math nodes, row sep=3.6em,
	column sep=3.5em,text height=1.5ex, text depth=0.25ex]
	{  
		\tilde{X}& X    &  \canmod & \P(2^2,3^4,4^4,5^3) \\
		\P^1& \P^1 \times \P^3 &  \P(2^2,3^4) & \\};
	\path[-stealth]
	(m-1-1) edge (m-1-2)
	edge node[left] {} (m-2-1)
	edge (m-2-2)
	(m-1-3) edge node[left] {} (m-2-3)
	(m-2-2) edge [dashed] (m-2-1)
	(m-2-3) edge[dashed] (m-2-2)
	(m-1-2) edge node[above] {$\pi$} (m-1-3)
	edge[dashed] node[left] {$g$}(m-2-2)       
	edge (m-2-3);
	\path[right hook->] (m-1-3) edge (m-1-4);         
	\end{tikzpicture}
\end{center}
As before, $\tilde{X}$ denotes the blow-up of the 4 bicanonical base points of a marked numerical Godeaux surface.
	Note that for $\Tors X = \ZZ/5\ZZ$, the image under the tricanonical map is a hypersurface of degree 7 in $\P^3$. Hence the ideal of the image of $X$ under the map $g$ to $\P^1 \times \P^3$ must contain a form of bidegree $(0,7)$. 
	\end{remark}
\begin{example}
	For example, lines in $Q \subset \PP^{11}$ given in Stiefel coordinates
	\setcounter{MaxMatrixCols}{20}
	\[
	\begin{pmatrix}
	0& 0& 0& p_0& 0& 0& 0& 0& p_1& 0& 0& 0 \\ 	
	0& q_0& 0& 0& 0& 0& 0& 0& 0& 0& q_1& 0
	\end{pmatrix},
	\]
	where $((p_0,p_1),(q_0,q_1)) \in \P^1 \times \P^1$, lead to $\ZZ/5\ZZ$- Godeaux surfaces whose two reducible fibers $C_{1,4}$ and $C_{2,3}$ are mapped to the union of lines $V(y_0,y_1) \cup V(y_2,y_3)$ and $V(y_0,y_2)\cup V(y_1,y_3)$ under the tricanonical map.
\end{example}
 
\begin{theorem}\label{Z5surf}
	There is subscheme of the Fano variety $F_1(Q)$ of lines in $Q$ isomorphic to an $S_4$-orbit of $6 = 2\binom{3}{2}$  surfaces of type $\P^1 \times \P^1$ whose elements lead generically to Godeaux surfaces with  $\Tors= \ZZ/5\ZZ$. Moreover, each surface gives a $2+9$-dimensional unirational family in the unfolding parameter space for marked Godeaux surfaces, which modulo the $(\CC^{*})^{3}$-action gives an $8$-dimensional family of Godeaux surfaces with $\Tors= \ZZ/5\ZZ$.
\end{theorem}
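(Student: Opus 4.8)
The plan is to verify the three quantitative claims of the theorem more or less in the order in which they are stated, leaning heavily on the homological characterizations established earlier. First I would identify the locus in $F_1(Q)$ that parametrizes lines giving $\ZZ/5\ZZ$-surfaces. By Theorem \ref{thm_torsionfib} and Proposition \ref{excludedCompo}, a line $\ell$ yields a $\ZZ/5\ZZ$-surface only if it meets two of the three degree-$1$ (linear) components $V_{tors}$ of $V(I_3(e))\cap Q$, namely the two $\PP^3$'s corresponding to the two reducible fibers $C_{1,4}$ and $C_{2,3}$. So I would fix an ordered pair of such $\PP^3$'s, impose the condition that the line through a point of each lies in $Q$, and analyze the resulting incidence variety $W \subset \PP^3\times\PP^3$. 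This is exactly the computation recorded in \texttt{lineConditionsTorsZ5}: the output decomposes into pieces of type $\PP^1\times\PP^1$, $\PP^0\times\PP^2$, and $\PP^2\times\PP^0$, and Figure \ref{tab:Z5surfaces} isolates the two $\PP^1\times\PP^1$ components on which the ring condition $R.C.$ holds and the bidegrees of the generators of $J$ match those of a genuine $\ZZ/5\ZZ$-Godeaux surface (in particular the bidegree-$(0,7)$ generator forced by Remark \ref{bitrimap}).

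Next I would account for the orbit structure. The choice of which two of the three $\PP^3$'s to meet, together with the $S_4$-operation on the coefficients from \cite[Proposition 4.4]{SS20}, should organize these $\PP^1\times\PP^1$'s into a single orbit. There are $\binom{3}{2}=3$ unordered pairs of reducible-fiber loci, and each pair contributes $2$ good components (the two $R.C.$-true surfaces in the table), giving $6 = 2\binom{3}{2}$ surfaces; I would check that $S_4$ permutes these six transitively, so the subscheme of $F_1(Q)$ they form is a single $S_4$-orbit as claimed. The example with the explicit Stiefel coordinates serves as a concrete base point of one such $\PP^1\times\PP^1$, which I can use to confirm the identification.

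For the dimension count I would assemble the numbers from the first row of the table. Each $\PP^1\times\PP^1$ of good lines is $2$-dimensional, so $f=2$. At a general such line the linear solution space of Construction step 2 jumps: because the line meets the two codimension-$\geq 2$ torsion loci, the kernel $E=\ker(m_a\otimes S_Q)$ acquires extra sections, and one computes $s=9$ rather than the generic value, giving total unfolding dimension $f+s=\mathbf{t}=11$. Quotienting by the $3$-dimensional $(\CC^*)^3$-action then leaves $11-3=8$, matching the known dimension of the $\ZZ/5\ZZ$ family. To close the argument I would invoke irreducibility of the $\ZZ/5\ZZ$ family (\cite{Miyaoka}, \cite{ReidGodeaux78}) together with the openness of the marked condition \cite[Proposition 3.4]{SS20}, so that the surfaces produced here, which carry two torsion fibers by construction, really do sweep out the full irreducible $8$-dimensional family.

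I expect the main obstacle to be establishing that the generic member of each good $\PP^1\times\PP^1$ \emph{actually} produces a smooth $\ZZ/5\ZZ$-Godeaux surface rather than merely a solution of the deformation problem (a possible ghost component) — i.e. that the ring condition plus Du Val singularities hold generically, and that the torsion is exactly $\ZZ/5\ZZ$ and not something smaller. Since the sufficient conditions of \cite[Theorem 5.02]{Stenger18} are only sufficient, the cleanest route is to exhibit one explicit line (the displayed Stiefel example) that verifiably yields a $\ZZ/5\ZZ$-surface with the two prescribed reducible fibers, and then transport this to the general member by the irreducibility-and-openness argument above; the remaining jump $s=9$ and the dimension bookkeeping are then routine \Mac2 verifications.
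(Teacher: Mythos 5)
Your proposal follows essentially the same route as the paper's proof: identify the admissible lines via Theorem \ref{thm_torsionfib} and Proposition \ref{excludedCompo} as those meeting two of the three linear components of $V(I_3(e))\cap Q$, extract the two good $\PP^1\times\PP^1$ components per pair from the \texttt{lineConditionsTorsZ5} computation, verify computationally that the six surfaces form one $S_4$-orbit, compute $s=9$ on an explicit example, and conclude $8=2+9-3$ using the known irreducibility of the $\ZZ/5\ZZ$ family and the openness of the marked condition. The paper's only additional remark, which you implicitly cover, is that $s=9$ is forced to be the generic value because it is the minimum compatible with the $8$-dimensional moduli count.
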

\begin{proof}
	The first part of the statement follows from the explanations from the beginning of the section. Indeed, we have a choice of $\binom{3}{2}$ $\PP^3$s which our line $\ell$ in $Q$ has to intersect. Then, for each choice we have 2 surfaces  isomorphic to $\PP^1 \times \PP^1$ which parametrize the corresponding lines. We verified computationally that these surfaces form one $S_4$-orbit of the group action described in \cite[Proposition 4.4]{SS20}. Now, choosing such a line, we compute in an example over $\mathbb{Q}$ that the solution space in the second step is 9-dimensional. Note that this is the minimal dimension which can occur as the number of moduli of a numerical Godeaux surface is 8 and we then obtain a family of dimension $8=2 + 9 -3 $  over an open subset where the minimum is attained. 
\end{proof}

\subsection{$\ZZ/3\ZZ$-surfaces}\label{Z3surfaces}
As in the case $\Tors = \ZZ/5\ZZ$,  surfaces with $\Tors = \ZZ/3\ZZ$ are completely classified:
\begin{theorem}[see \cite{ReidGodeaux78}]
Numerical Godeaux surfaces with $\Tors = \ZZ/3\ZZ$ form an irreducible, unirational $8$-dimensional component of the moduli space of Godeaux surfaces. 
\end{theorem}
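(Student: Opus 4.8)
The plan is to pass to the connected triple étale cover $\pi\colon \widehat Y \to X$ determined by $\Tors X = \ZZ/3\ZZ$ and to study its $G=\ZZ/3\ZZ$-equivariant canonical ring. Since $\pi$ is étale of degree $3$ we have $K_{\widehat Y}=\pi^{*}K_X$, $K_{\widehat Y}^{2}=3$, $\chi(\sO_{\widehat Y})=3$ and $q(\widehat Y)=0$, so $\widehat Y$ is a minimal surface of general type with $p_g(\widehat Y)=2$ carrying a free $G$-action with quotient $X$. First I would decompose each graded piece $H^{0}(\widehat Y,nK_{\widehat Y})$ into the three $G$-eigenspaces: the invariant part recovers $R(X)=R(\widehat Y)^{G}$, while the two nontrivial characters give $\bigoplus_{n}H^{0}(X,nK_X+\tau_i)$ for the two nonzero $\tau_i\in\Tors X$. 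A Riemann--Roch computation on $X$ (using $\tau_i^{2}=0$ and $K_X\tau_i=0$) yields the eigenspace dimensions $(0,1,1)$, $(2,2,2)$ and $(4,4,4)$ in degrees $1,2,3$, and this pins down a minimal set of equivariant generators of $R(\widehat Y)$ together with the degrees of the relations among them.

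With generators and relations in hand, the canonical model $\Proj R(\widehat Y)$ is realized as a $G$-invariant subscheme of a weighted projective space, cut out by equations whose coefficients range over an affine space $\AA^{N}$. I would then (i) single out the Zariski-open locus $U\subset\AA^{N}$ for which the resulting surface has at worst rational double points, so that $\widehat Y$ has canonical singularities and $X=\widehat Y/G$ is a numerical Godeaux surface with $\Tors X=\ZZ/3\ZZ$; and (ii) prove the construction is \emph{surjective}, i.e.\ every such $X$ arises this way, by reconstructing the coefficients directly from the equivariant canonical ring of an arbitrary $\ZZ/3\ZZ$-surface. Surjectivity is what guarantees that the constructed family is the entire $\ZZ/3\ZZ$-locus and not merely a subfamily.

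Irreducibility and unirationality are then formal. The locus $U$ is an irreducible rational variety, and the moduli map $U\to\mathcal M$ factors through the quotient of $U$ by the connected group of coordinate changes respecting the $G$-grading (a product of general linear groups of the eigenspaces together with the residual scalars), whose orbits are irreducible; hence the image is irreducible and is dominated by a rational variety. For the dimension I would compute $\dim U$ minus the dimension of this group and check that it equals the expected number of moduli $10\chi(\sO_X)-2K_X^{2}=8$; here $h^{0}(X,T_X)=h^{2}(X,T_X)=0$, so $h^{1}(X,T_X)=8$ and $\mathcal M$ is smooth of dimension $8$ at a general such $X$.

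The hard part is the final upgrade from \textit{$8$-dimensional family} to \textit{$8$-dimensional component}. Since $\mathcal M$ is smooth of dimension $8$ at the general member and the constructed family is $8$-dimensional and irreducible, it suffices to show that the moduli map is generically injective on orbits (so the image is genuinely $8$-dimensional, not contracted); then the image is open in an irreducible component and its closure is that component. Thus the real obstacle is the deformation-theoretic input --- unobstructedness ($h^{2}(X,T_X)=0$) together with generic injectivity of the reconstruction map --- rather than any of the explicit ring computations. In the homological language of Section~\ref{sec_prelim} the same conclusion is reached by the analogue of Theorem~\ref{Z5surf}: by Theorem~\ref{thm_torsionfib} a $\ZZ/3\ZZ$-surface corresponds to a line $\ell\subset Q$ meeting a \emph{single} $\PP^{3}\subset V(I_{3}(e))\cap Q$ (one torsion fiber rather than two), and one computes this locus of lines and the dimension of the second-step solution space, verifies with \Mac2 that the generic member is a $\ZZ/3\ZZ$-Godeaux surface, and invokes the irreducibility of \cite{ReidGodeaux78} to finish.
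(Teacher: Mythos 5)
This theorem is quoted from Reid \cite{ReidGodeaux78}; the paper offers no proof of its own beyond the remark immediately following the statement, namely that one passes to the triple \'etale cover $Y\to X$ with $K_Y^2=3$, $p_g(Y)=2$, $q(Y)=0$ carrying a free $\ZZ/3\ZZ$-action, which Reid classifies --- and your outline (eigenspace decomposition of the equivariant canonical ring of the cover, surjectivity of the reconstruction, rationality of the parameter space, and the dimension count $10\chi-2K^2=8$) is precisely that strategy, so it matches the approach the paper relies on. The one point worth adjusting is the final upgrade from \emph{$8$-dimensional irreducible family} to \emph{component}: this follows most cleanly from the fact that $\Tors X\cong H_1(X,\ZZ)$ is a topological, hence deformation, invariant, so the $\ZZ/3\ZZ$-locus is open and closed in the moduli space of Godeaux surfaces and its irreducibility alone makes it a component --- no appeal to $h^2(X,T_X)=0$ or smoothness of the moduli space is needed.
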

As in the previous section, the idea is to start with a covering $Y \rightarrow X$, where $Y$ is a surface of general type with $K^2 = 3, p_g =2$ and $q=0$ on which the group $G= \ZZ/3\ZZ$ acts freely. Reid completely describes those covering surfaces in \cite{ReidGodeaux78} and gives a refined description for the equations of $Y$ using unprojection in \cite{Reid13}.

In \cite{Stenger18}, using the unprojection method from \cite{Reid13}, we construct a general cover surface $Y$ and the canonical ring $R(X) = R(Y)^G$ and verified that there are four distinct base points of $|2K_X|$ and that the canonical model $\canmod$ is smooth at these base points. Hence, we may assume that a general  $\ZZ/3\ZZ$-Godeaux surface admits a marking.

As in the previous section, we want to parametrize lines in $Q$ leading to $\ZZ/3\ZZ$-Godeaux surfaces. Recall that the tricanonical system of a surface $X$ with $\Tors = \ZZ/3\ZZ$ has a single base point contained in a unique bicanonical fiber $C_{1,2} = D_1+D_2$. 
Thus, by Theorem \ref{thm_torsionfib}  and Proposition \ref{excludedCompo}, we have to construct lines in $Q$ meeting a unique $\P^3 \subset V(I_3(e)) \cap Q$. We first choose a general point $p$ in such a $\P^3$ and then a second point in the cone
\[ Z = T_pQ \cap Q \subset \P^{11}. \]
In each irreducible component $Z_i$ of $Z$ we choose a point $q$ and examine the surface constructed from the line $\ell = \overline{pq}$. We present the computational results in a table, using
\[Z_i = \text{ irreducible component of $Z = T_p(Q) \cap Q \subset \P^{11}$}\]
and $f,s,t$ and $J$ are defined as in Table 
\eqref{tab:Z5surfaces}.
\smallskip

\begin{tabular}{l|p{1.35cm}|p{0.3cm}|l|p{2.3cm}|p{6.19cm}}
	& $\betti Z_i$ 
	$\#$ of $Z_i$  & $f$  $s$  $\textbf{t}$ & $R.C.$ & $\#$ of gen. of $J$ of a given bidegree& comments \\ \hline 
	1.1) &  $\begin{matrix}
	&0&1\\
	\text{0:}&1&6\\\text{1:}&\text{.}&3\\\end{matrix}$ \newline  \newline  $1$
	& 5  6 \textbf{11} & true & 
	$\! \begin{aligned}	
	\left\{0,\,8\right\} & \Rightarrow 1\\
	\left\{1,\,3\right\} & \Rightarrow 1 \\
	\left\{1,\,5\right\} & \Rightarrow 1 \\
	\left\{2,\,2\right\} & \Rightarrow 1 
	\end{aligned}$ & 
	a general $\ZZ/3\ZZ$ Godeaux surface
	\\ \hline
		1.2) & 
	$\begin{matrix}
	&0&1\\
	\text{0:}&1&8\\\end{matrix}$ \newline \newline $4$
	& 5  6  \textbf{11} & true & 
	$\! \begin{aligned}	
	\left\{0,\,2\right\} &\Rightarrow 1\\
	\left\{3 ,\,4\right\} &\Rightarrow 2 \\
	\left\{4 ,\,1\right\} &\Rightarrow 1
	\end{aligned}$ & $J$ defines a surface in $\P^1 \times \P^3$ which decomposes into a union of two $(3,3)$-hypersurfaces in $\P^1 \times \P^2$  \\ \hline 
	1.3)&
	 $\begin{matrix}
	&0&1\\
	\text{0:}&1&8\end{matrix}$ \newline \newline $4$
	& 5  12 \textbf{17} & false & 
	$\! \begin{aligned}	
	\left\{0,\,2\right\} & \Rightarrow 2\\
	\left\{1,\,2\right\} & \Rightarrow 1 \\
	\end{aligned}$ & 
	$J$ defines a surface in $\P^1 \times \P^3$ which decomposes into a $\P^1 \times \P^1$ and a $(1,2)$-hypersurface in $\P^1 \times \P^2$.  \\ \hline 
	1.4)&
  $\begin{matrix}
	&0&1\\
	\text{0:}&1&8\end{matrix}$ \newline \newline $1$
	& 4  31 \textbf{35} & false & 
	$\! \begin{aligned}	
	\left\{0,\,2\right\} & \Rightarrow 4 \\
	\end{aligned}$ & 
	$J$ defines a union of two $\P^1 \times \P^1$'s  in $\P^1 \times \P^3$ \\
\end{tabular}

\begin{remark}
Only the first example leads to a numerical Godeaux surface $X$ with $\Tors X = \ZZ/3\ZZ$.  Note that the image of $X$ under the tricanonical map is a hypersurface of degree 8 in $\P^3$. Hence the ideal of the image of $X$ under the product of the bi- and tricanonical map to $\P^1 \times \P^3$ must contain a form of bidegree $(0,8)$. 
\end{remark}
Analyzing the component $Z_1 \subset T_p(Q)$ from the case 1.1) we see that it is cone with vertex $p$ over a singular surface in $\P^4$ with minimal free resolution 
\[
\begin{matrix}
&0&1&2\\\text{total:}&1&3&2\\\text{0:}&1&\text{.}&\text{.}\\\text{1:}&\text{.}&3&2\\\end{matrix}.
\]
Using this observation, we are able to rationally parametrize lines for the case 1.1.):
\begin{theorem}
	There exists an $S_4$-orbit of three unirational 5-dimensional  subschemes  of $F_1(Q)$ whose elements lead generically to  numerical Godeaux surfaces with $\Tors X = \ZZ/3\ZZ$. One of them is parameterized by 
\[ 
 \varphi \colon \P(\sO_{\P^3 \times \P^1}(2,3)  \oplus \sO_{\P^3 \times \P^1}) \dashrightarrow F_1(Q)  \]
 \[
(u,w,z)  \mapsto 
	\langle \begin{pmatrix}
		{u}_{0}\\
		0\\
		0\\
		{u}_{1}\\
		0\\
		0\\
		0\\
		0\\
		{u}_{2}\\
		0\\
		0\\
		{u}_{3}\end{pmatrix}, \begin{pmatrix}
		{{u}_{0}^{2}{u}_{1}{w}_{1}^{3}{z}_{1}}\\
		{{u}_{1}{u}_{3}^{2}{w}_{0}^{2}{w}_{1}{z}_{1}}\\
		-{u}_{1}{u}_{2}^{2}{w}_{0}^{2}{w}_{1}{z}_{1}\\
		0\\
		{{u}_{0}{u}_{3}^{2}{w}_{0}^{2}{w}_{1}{z}_{1}}\\
		-{u}_{0}{u}_{2}^{2}{w}_{0}^{2}{w}_{1}{z}_{1}\\
		-{u}_{1}^{2}{u}_{3}{w}_{0}{w}_{1}^{2}{z}_{1}\\
		{{u}_{0}^{2}{u}_{3}{w}_{0}{w}_{1}^{2}{z}_{1}}\\
		{u}_{2}{z}_{0}-{u}_{2}^{2}{u}_{3}{w}_{0}^{3}{z}_{1}\\
		-{u}_{1}^{2}{u}_{2}{w}_{0}{w}_{1}^{2}{z}_{1}\\
		{{u}_{0}^{2}{u}_{2}{w}_{0}{w}_{1}^{2}{z}_{1}}\\
		{{u}_{3}{z}_{0}}\end{pmatrix} \rangle.
	\]
	Moreover, there exists a $5+6$-dimensional unirational family in the unfolding parameter space for marked Godeaux surfaces, which modulo the $(\CC^{*})^{3}$-action gives a $8$-dimensional family of Godeaux surfaces with Torsion group $T= \ZZ/3\ZZ$.
\end{theorem}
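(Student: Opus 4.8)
The argument follows the pattern of Theorem~\ref{Z5surf} and Theorem~\ref{hypLocus}; the new feature is that the admissible lines now sweep out a $5$-dimensional family, which I would exhibit through an explicit rational parametrization of the relevant component of the cone $T_pQ\cap Q$.

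First I would recall from Theorem~\ref{thm_torsionfib} and Proposition~\ref{excludedCompo} that a marked line $\ell\subset Q$ can produce a surface with $\Tors X=\ZZ/3\ZZ$ only if it meets exactly one of the three linear components of $V(I_3(e))\cap Q$ and avoids the four degree-$2$ components. By \cite[Proposition~4.4]{SS20} the group $S_4$ permutes these three linear $\P^3$'s transitively, the action factoring through $S_4\twoheadrightarrow S_3$, so it is enough to parametrize the lines through one fixed $\P^3$ and to transport the answer by $S_4$; this produces the asserted orbit of three $5$-dimensional subschemes of $F_1(Q)$.

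Next I would fix that $\P^3$, parametrized linearly by $u=(u_0:u_1:u_2:u_3)$ and appearing as the first column of $\varphi$, and take a general point $p=p(u)$ on it. The cone $Z=T_pQ\cap Q$ decomposes into the components of cases $1.1)$--$1.4)$, and by the remark following that table only $Z_1$ (case $1.1)$) yields irreducible surfaces with $\Tors X=\ZZ/3\ZZ$; the other components either fail the ring condition or give reducible models in $\P^1\times\P^3$. The geometric core is the observation that $Z_1$ is the cone with vertex $p$ over a singular determinantal surface $S\subset\P^4$ with the resolution displayed before the statement. Being cut out by the maximal minors of a $3\times2$ matrix of linear forms, $S$ is rational and admits a birational parametrization in the two $\P^1$-factors $w=(w_0:w_1)$ and $z=(z_0:z_1)$. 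Lifting this parametrization to the cone and recording the relative coordinate of the bundle $\P(\sO_{\P^3\times\P^1}(2,3)\oplus\sO_{\P^3\times\P^1})$ yields the second column of $\varphi$; the twist $(2,3)$ is precisely the bidegree in $(u,w)$ forced by the equations of $Z_1$, visible in the $z_1$-entries of multidegree $(3,3)$ against the $z_0$-entries of multidegree $(1,0)$.

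It then remains to verify, by finite \Mac2 computations of the kind used in Theorem~\ref{hypLocus}, that $\ell=\overline{p(u),\,q(u,w,z)}$ lies in $Q$ for all $(u,w,z)$, that $\varphi$ is dominant onto a $5$-dimensional component of $F_1(Q)$ (checked via the rank of its Jacobian), and that $\varphi$ is birational onto its image (a general source point is recovered from its image up to the base locus). For the family count I would take one such line over $\QQ$ and solve the linear system of the second construction step, finding a solution space of projective dimension $s=6$, the minimum attained on this locus. Together with the $f=5$ parameters for the line this gives a $5+6=\mathbf{11}$-dimensional unirational family in the unfolding parameter space, which modulo the $(\CC^{*})^{3}$-action becomes $8$-dimensional. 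Since $\ZZ/3\ZZ$-Godeaux surfaces form an irreducible $8$-dimensional family by \cite{ReidGodeaux78} and a general one admits a marking, this family is the whole component. The main obstacle is the construction of the parametrization itself: identifying $Z_1$ as a cone over $S$, parametrizing $S$, and---as in the hyperelliptic case---guessing the correct bundle $\sO_{\P^3\times\P^1}(2,3)\oplus\sO_{\P^3\times\P^1}$ so that the lift recovers $Z_1$; once $\varphi$ is written down, dominance and birationality are routine.
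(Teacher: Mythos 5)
Your proposal is correct and follows essentially the same route as the paper: restrict to lines through one of the three linear components of $V(I_3(e))\cap Q$ (transported by the $S_4$-action), identify the relevant component $Z_1$ of $T_pQ\cap Q$ from case 1.1) as a cone over a determinantal surface, parametrize it via the Hilbert--Burch matrix and a syzygy computation to obtain the $(2,3)$-twisted bundle, and then count $f=5$, $s=6$, $t=11$ modulo the $(\CC^*)^3$-action. The only difference is presentational: the paper carries out the reduction explicitly through a $\P^7$-bundle, two rank-one determinantal substitutions, and the $3\times 6$ linear system whose kernel yields $n$, whereas you summarize these as a single lift of the rational parametrization of the base surface.
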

\begin{proof}
First, we choose one of the three components in $V(I_3(e)) \cap Q$ isomorphic to a $\P^3$, for example
	\[
V = V(a_{3,1}, a_{3,0}, a_{2,1},a_{2,0},
	a_{1,3} , a_{1,2},a_{0,3}, a_{0,2} ) \subset Q.
	\]
	Let $u_0,\ldots,u_3$ be new homogeneous coordinates, then a general point $p$ in $V$ is given by
	\[
p = {\left({\begin{array}{cccccccccccc}
		{u}_{0}&0&0&{u}_{1}&0&0&0&0&{u}_{2}&0&0&{u}_{3}\\
		\end{array}}\right)}^t
	\]
and the tangent space is isomorphic to a $\P^7$. Computing the equations,  we obtain a $\P^7$-bundle
	\[ \P(\sO_{\PP^{3}}^4 \oplus \sO_{\PP^{3}}(1)^4) \]
	with homogeneous coordinates $v_0,\ldots,v_7$. 
	Next, we consider the restriction of $Q$ to this $\P^7$-bundle  and choose the irreducible component $\tilde{Z_1}$ of $Q$ specializing to the component $Z_1$ described above.  
	Among the defining equations $\tilde{Z_1}$, there are two determinantal equations given by
	\[
	\begin{pmatrix}
	 u_0^2 &  v_7 \\     
	 u_1^2  & -v_6          
	\end{pmatrix} \text{ and }
		\begin{pmatrix} 
	u_2^2 &  v_5 \\
	u_3^2  &-v_4 
	\end{pmatrix}.
	\]
	Using the substitution 
  \begin{equation*}
  v_7  = v_{6,7}u_0^2 , \ \  v_6   = - v_{6,7}u_1^2,  \ \
   v_5  = v_{4,5}u_2^2,\ \  v_6   = - v_{4,5}u_3^2 
  \end{equation*}
	with new coordinates in $v_{6,7}$ and $v_{4,5}$, we obtain a new variety $Z_1'$ in a $\P^1$-bundle over $\P(\sO_{\PP^{3}}^4)$ defined by 3 equations whose minimal free resolution is given by the Hilbert-Burch matrix 
	 \[m = \begin{pmatrix}
	-{u}_{1}{v}_{0}+{u}_{0}{v}_{1}&{-{u}_{0}^{2}{u}_{1}^{2}{v}_{67}}\\
	{u}_{2}^{2}{u}_{3}^{2}{v}_{45}&{u}_{3}{v}_{2}-{u}_{2}{v}_{3}\\
	{v}_{67}&{v}_{45}\end{pmatrix}.\]
Now regarding 
\[ m \begin{pmatrix}
w_0\cr 
w_1\cr
\end{pmatrix}=0  \]
as a linear system for $v_0,\ldots,v_3,v_{4,5},v_{6,7},$ with coefficients in the coordinates $(u,w_0,w_1)$ of $\P^3 \times \P^1$ we  
obtain a $3 \times 6$ matrix $m$ with a rank $3$ kernel
\[ \sO(2,1)^2\oplus \sO(-2,1) \overset{m}{\longleftarrow} \sO(1,0)^4 \oplus \sO(-2,0)^2 \overset{n}{\longleftarrow} \sO^2\oplus \sO(-2,-3) \] 	
and
\[ n^t = 
\begin{pmatrix}
u_{0}&u_{1}&0&0&0&0\\
0&0&u_{2}&u_{3}&0&0\\
u_{0}^{2}u_{1}w_{1}^{3}&0&-u_{2}^{2}u_{3}w_{0}^{3}&0&-w_{0}w_{1}^{2}&w_{0}^{2}w_{1}\\
\end{pmatrix}
\]
As the sum of the first and second line gives the point $p$ (in the corresponding coordinates) and we are looking for a second point $q \in T_p{Q}$ with $q \neq p$, we obtain only 2-dimensional solution spaces for $v_0,\ldots,v_3,v_{4,5},v_{6,7},$. We choose the space spanned by the last two columns of $n$ which has the right dimension for general points $(u,w) \in \P^3 \times \P^1$. Thus, we obtain a birational map
\begin{align*}
\P(\sO_{\P^3 \times \P^1}(2,3)  \oplus \sO_{\P^3 \times \P^1})  & \dashrightarrow Z_1' \\
(u_0,\ldots,u_3,w_0,w_1,z_0,z_1) & \mapsto \begin{pmatrix}
0 &z_0 &z_1
\end{pmatrix}n^t.
\end{align*}
Reversing the single substitution steps, we obtain the desired parametrization for the second point $q$ and hence for a line $\ell = \overline{pq} \subset Q$. Moreover, we verify that for a general point $(u,w,z)$ the line specializes to one chosen in the case 1.1) above which leads to a numerical Godeaux surface with $\Tors = \ZZ/3\ZZ$.  The rest of the proof and the dimension count of the constructed family is now exactly as in Theorem \ref{Z5surf}.
\end{proof}

\section{Special surfaces and ghost components}\label{ghost}
Recall that any line in $Q$ meeting none of the special loci described in Section \ref{sec_prelim} leads to a member of our constructed dominant family of \cite{SS20}, that means to a torsion-free marked numerical Godeaux surface with no hyperelliptic fiber. Marked numerical Godeaux surfaces with a non-trivial torsion group form an irreducible family and their associated lines in $Q$ were described in Section \ref{Z5surfaces} and Section \ref{Z3surfaces}.  Thus, the question remains whether there exists a different family of torsion-free marked numerical Godeaux surfaces. The associated lines of such a family must necessarily meet one of the special loci. 

\subsection{Lines meeting one special locus}
In the following, we represent our computational experiments with \Mac2 for lines in $Q$ meeting one of the special loci. We work over the finite field $\FF_{32233}$ and proceed as follows. First we choose for each type of the the special loci a representative (under the $S_4$-action) and compute a general point $p$ in this component. Afterwards, we determine the intersection 
\[ Z = T_p(Q) \cap Q \subset T_p(Q) \subset \P^{11},\]
where $Z$ is a cone with vertex $p$.  Except for points in the hyperelliptic locus, we can decompose $Z$ into its irreducible components. For general points in the hyperelliptic locus we believe that $Z$ is an irreducible variety. Next we choose a general point $q$ in each of the irreducible components of $Z$ and obtain a line $\ell = \overline{pq} \subset Q$. Finally, we compute the solution space and determine the dimension of the constructed family of varieties. 

Let $F$ be the obtained complex with Betti numbers as in Equation \eqref{bettinumbers}, and let $d_1$ be the first syzygy matrix.
 In the end, we verify whether the resulting module $R = \coker d_1$ satisfies the ring condition $(R.C.)$ from \cite[Remark 2.3]{SS20} and compute the bihomogeneous model of the variety $V( \ann \coker d_1)$ in $\P^1 \times \P^3$. 

Recall that if $R$ is the canonical ring of a numerical Godeaux surface $X$, then the ideal of the model in  $\P^1 \times \P^3$ must contain a form of bidegree
$(0,9 - \# \text{ of base points of } 3K_X)$, hence of bidegree $(0,9)$, $(0,8)$ or $(0,7)$. 

We represent the results in a table 
denoting by $W$ the chosen special locus in $Q$ and using the notation $f,s,t,J$ and $Z_i$ as in Table \eqref{tab:Z5surfaces}.
Note that if $Z$ has several irreducible components leading to families with the same values for $f,s$ and $t$ and the same type of irreducible components of the model in $\P^1 \times \P^3$, we state the number of those components and display only the computational results for one such component.
\begin{figure}
\begin{tabular}{l|p{1.6cm}|p{1.8cm}|p{1.6cm}|p{0.33cm}|l|p{1.8cm}|p{3.27cm}}
	&$\betti W$ $\dim W$ &occurrence of $W$ & $\betti Z_i$ 
	$\#$ of $Z_i$ & $f$  $s$  $\textbf{t}$ & $R.C.$ & $\#$ of gen. of $J$ of a given bidegree& comments \\ \hline 
	1) & 
	$\begin{matrix} &0&1\\
	\text{0:}&1&6\\
	\text{1:}&\text{.}&1\\\end{matrix}$  \newline \newline  $4$
 &
		 support of $H_0(C_1)$, 
		 support of $V(m_3(e))$
	& 
	 $\begin{matrix}
		&0&1\\
		\text{0:}&1&6\\
		\text{1:}&\text{.}&1\\\end{matrix}$
		\newline \newline $3$
	 & 7  7   \textbf{14} & false &
   $\! \begin{aligned}	
   \left\{0,\,2\right\} &\Rightarrow 2\\
  \left\{4,\,4\right\} &\Rightarrow 1
\end{aligned}$ & 
$J$ defines a surface in $\P^1 \times \P^3$ which decomposes into a  $\P^1 \times \P^1$ and a $(4,4)$-hypersurface in $\P^1 \times \P^2$.
\\ \hline
 2.1) &	$\begin{matrix} &0&1\\
\text{0:}&1&5\\
\text{1:}&\text{.}&1\\\end{matrix}$  \newline \newline $5$
&
support of $H_1(C_2)$ & 
$\begin{matrix}
&0&1\\
\text{0:}&1&4\\\text{1:}&\text{.}&7\\\end{matrix}$ \newline \newline $1$
& 7   3  \textbf{10} & true &  
$\! \begin{aligned}	
\left\{0,\,9\right\} &\Rightarrow 1\\
\left\{1,\,6\right\} & \Rightarrow 3 \\
& \vdots  \\
\left\{6,\,2\right\} & \Rightarrow 3 \\
\end{aligned}$ & contained in the closure of the dominant component
\\ \hline
2.2) &
& & 
$\begin{matrix}
&0&1\\
\text{0:}&1&6\\\text{1:}&\text{.}&1\\\end{matrix} \newline \newline $1$
$
&7  7  \textbf{14} & false & 
$\! \begin{aligned}	
\left\{0,\,2\right\} &\Rightarrow 2\\
\left\{4,\,4\right\} &\Rightarrow 1
\end{aligned}$ &  	
as in 1) \\ \hline
	3.1) &	$\begin{matrix} &0&1\\
	\text{0:}&1&4\\
	\text{1:}&\text{.}&2\\\end{matrix}$  \newline \newline $5$
	&
	support of $H_1(C_2)$ & 
	$\begin{matrix}
	&0&1\\
	\text{0:}&1&4\\\text{1:}&\text{.}&5\\\end{matrix}$ \newline \newline $1$
	& 7  3  \textbf{10} & true &  
	$\! \begin{aligned}	
	\left\{0,\,9\right\} &\Rightarrow 1\\
	\left\{1,\,6\right\} & \Rightarrow 2 \\
	& \vdots \\
	\left\{7,\,2\right\} & \Rightarrow 1 \\
	\end{aligned}$ & a torsion-free numerical Godeaux surface with no hyp. fibers
	\\ \hline 
	3.2) &
	&  & 
	$\begin{matrix}
	&0&1\\
	\text{0:}&1&8\\\end{matrix}$ \newline \newline $1$
	& 6  5  \textbf{11} & true & 
	$\! \begin{aligned}	
	\left\{0,\,2\right\} &\Rightarrow 1\\
	\left\{3,\,4\right\} &\Rightarrow 2\\
	\left\{5,\,3\right\} &\Rightarrow 1
	\end{aligned}$ &  	
	surface in $\P^1 \times \P^3$ decomposes into a union of two surfaces which are $(3,3)$ hypersurfaces in $\P^1 \times \P^2$ 
	\\ \hline 
	3.3) &
	&  & 
	$\begin{matrix}
	&0&1\\
	\text{0:}&1&8\\\end{matrix}$ \newline \newline $2$
	& 6  4  \textbf{10} & true & 
	$\! \begin{aligned}	
	\left\{0,\,2\right\} &\Rightarrow 1\\
	\left\{3,\,4\right\} &\Rightarrow 2\\
	\left\{5,\,3\right\} &\Rightarrow 1
	\end{aligned}$ &  	
	as in 2.2)
	\\ \hline 
	3.4) &
	&  & 
	$\begin{matrix}
	&0&1\\
	\text{0:}&1&8\\\end{matrix}$ \newline \newline $2$
	& 6  3  \textbf{9} & true & 
	$\! \begin{aligned}	
	\left\{0,\,2\right\} &\Rightarrow 1\\
	\left\{3,\,4\right\} &\Rightarrow 2\\
	\left\{5,\,3\right\} &\Rightarrow 1
	\end{aligned}$ &  	
	as in 2.2) 
\end{tabular}
\end{figure}
\newpage 
\noindent
\begin{figure}[h]
\begin{tabular}{l|p{1.8cm}|p{1.8cm}|p{1.8cm}|p{0.3cm}|l|p{1.8cm}|p{3.09cm}}
 4) &	$\begin{matrix} &0&1\\
 \text{0:}&1&3\\
 \text{1:}&\text{.}&4\\
 \text{2:}&\text{.}&1\\\end{matrix}$  \newline \newline $4$
 &
 support of $H_0(C_1)$, contained in the hyperelliptic locus & 
 $\begin{matrix}
 &0&1\\
 \text{0:}&1&4\\
 \text{1:}&\text{.}&4\\\end{matrix} \newline \newline 1
 $
 & 6  3  \textbf{9} & true &  
 $\! \begin{aligned}	
 \left\{0,\,9\right\} &\Rightarrow 1\\
 \left\{1,\,6\right\} & \Rightarrow 5 \\
 & \vdots \\
 \left\{5,\,2\right\} & \Rightarrow 1\\
 \end{aligned}$ & torsion-free numerical Godeaux surface with one hyperelliptic fiber  \\ \hline 
 5) &	$\begin{matrix} &0&1\\
 \text{0:}&1&\text{.}\\
 \text{1:}&\text{.}&4\\
 \text{2:}&\text{.}&4\\
 \text{5:}&\text{.}&4\\\end{matrix}$  \newline \newline $5$
&
 support of $H_0(C_2)$, the hyperelliptic locus & 
 $\begin{matrix}
 &0&1\\
 \text{0:}&1&4\\
 \text{1:}&\text{.}&4\\\end{matrix} \newline \newline $1$
 $
 & 7 3  \textbf{10} & true &  
 $\! \begin{aligned}	
 \left\{0,\,9\right\} &\Rightarrow 1\\
 \left\{1,\,6\right\} & \Rightarrow 5 \\
 & \vdots \\
 \left\{5,\,2\right\} & \Rightarrow 1\\
 \end{aligned}$ & torsion-free numerical Godeaux surface with one hyperelliptic fiber  \\
\end{tabular}
\end{figure}
\begin{remark}
	Note that lines meeting the components in the vanishing locus of the $3\times 3$ minors of $e$ with Betti table 
	\[	\begin{matrix}	&0&1\\\text{total:}&1&8\\\text{0:}&1&8
		& & \end{matrix}\] 
have already been studied in Section \ref{Z3surfaces}.
\end{remark}
\subsection{Lines meeting two special loci}
In this section, we present our computational experiments for lines meeting at least two special loci.  We work over the finite field $\FF_{32233}$. Lines meeting components of the vanishing locus of the $3\times 3$ minors of the $e$-matrix have already been studied in the previous sections. They either lead to marked numerical Godeaux with a non-trivial torsion group or to surfaces which are no numerical Godeaux surfaces (Theorem \ref{thm_torsionfib} and Proposition \ref{excludedCompo}).
 Thus, we exclude lines meeting any of these components from our study. 
 
 Note that  meeting two special loci in $Q$ induces at least a codimension 1 condition. Thus, being interested in further components or ghost components of our construction space, we will only display results, where the projective dimension of the linear solution space in the second step is at least 4.  Let $W_1$ and $W_2$ denote the two chosen special loci in $Q$. Then we choose first a general point $p \in W_1 \subset Q$. Afterwards we compute (if possible) the irreducible components $Z_i$ of 
\[   W_2 \cap T_p(Q) \subset Q \subset \PP^{11} \] and choose in each component $Z_i$ a general point $q$. The line $\ell = \overline{pq}$ intersects the two chosen loci in at least one point each. Note that in several cases the resulting line $\ell$ intersects also other loci non-trivially or may also be contained completely in some of the loci.  Thus, even for the same choice of pair $(W_1,W_2)$ we can obtain linear solution spaces of different dimensions depending on the choice of the component $Z_i$. 
\par\bigskip
\noindent
\begin{tabular}{l|p{1.43cm}|p{1.43cm} |p{0.25cm}|l|p{1.8cm}|p{5.05cm}} 
		&$\betti W_1$  &  $\betti W_2$ 
		& 
	  $s$  & $R.C.$ & $\#$ of gen. of $J$ of a given bidegree  
	  & comments \\ \hline 
	  1.) & 
	  $\begin{matrix}
	  &0&1\\
	  \text{0:}&1&3\\
	  \text{1:}&\text{.}&4\\
	  \text{2:}&\text{.}&1\end{matrix}
	  $ & $\begin{matrix} &0&1\\
	  \text{0:}&1&4\\
	  \text{1:}&\text{.}&2\\\end{matrix}$ & 4 & false & $\! \begin{aligned}	
	  \left\{0,\,2\right\} &\Rightarrow 1\\
	  \left\{1,\,2\right\} &\Rightarrow 1
	  \end{aligned}$ & $J$ defines a surface in $\P^1 \times \P^3$ which decomposes into a  $\P^1 \times \P^1$ and a non-complete intersection surface  in $\P^1 \times \P^3$. \\ \hline
	  2.1) & 
	  $\begin{matrix} &0&1\\
	  \text{0:}&1&4\\
	  \text{1:}&\text{.}&2\\\end{matrix}$ 
	   & 
	  $\begin{matrix} &0&1\\
	  \text{0:}&1&4\\
	  \text{1:}&\text{.}&2\\\end{matrix}$ 
	  & 4  \newline or 5 \newline & true & 
	$\! \begin{aligned}	
	\left\{0,\,2\right\} &\Rightarrow 1\\
	\left\{3,\,4\right\} &\Rightarrow 2\\
	\left\{5,\,3\right\} &\Rightarrow 1
	\end{aligned}$  &
	  $J$ defines a surface in $\P^1 \times \P^3$ which decomposes into a union of two  $(3,3)$-hypersurfaces in $\P^1 \times \P^2$. 
	  \\ \hline
	    2.2) & 
	  $\begin{matrix} &0&1\\
	  \text{0:}&1&4\\
	  \text{1:}&\text{.}&2\\\end{matrix}$ 
	  & 
	  $\begin{matrix} &0&1\\
	  \text{0:}&1&5\\
	  \text{1:}&\text{.}&1\\\end{matrix}$ 
	 & 4  \newline or 5  & true & 
	  as in 2.1) & 
	  as in 2.1)
	  \\ \hline
	  2.3) & 
	  $\begin{matrix} &0&1\\
	  \text{0:}&1&4\\
	  \text{1:}&\text{.}&2\\\end{matrix}$ 
	  & 
	  $\begin{matrix} &0&1\\
	  \text{0:}&1&4\\
	  \text{1:}&\text{.}&2\\\end{matrix}$ 
	& 4  \newline or 5 & true & 
	 $\! \begin{aligned}	
	 \left\{0,\,2\right\} &\Rightarrow 1\\
	 \left\{3,\,4\right\} &\Rightarrow 2\\
	 \left\{4,\,3\right\} &\Rightarrow 1
	 \end{aligned}$  &
	 as in 2.1)
	\end{tabular}

Depending on the dimension of the family of lines (and the group operation of $(\CC^*)^3$), the cases presented above may lead to new ghost components.

For the sake of completeness, we present the results for lines meeting the hyperelliptic locus in two different points. Recall that the hyperelliptic locus $V_{hyp}$ is a 5-dimensional subscheme of $Q$. For a point $p \in V_{hyp}$, the intersection $V_{hyp} \cap T_p(Q)$ is a curve through $p$ of degree 72 in $T_p(Q) \cong \PP^7$ (a general point in the hyperelliptic locus is a smooth point of $Q$). Hence, we have 1-dimensional choice for the second point $q$, and thus in total a 6-dimensional family of lines. Furthermore, we verified that  $(\CC^*)^3$ operates with a trivial stabilizer on the general line in this family. We obtain a 6-dimensional family of torsion-free numerical Godeaux surfaces with two hyperelliptic fibers with the following numerical data:
\par \medskip
\noindent
\begin{tabular}{l|p{1.43cm}|p{1.43cm} |p{0.25cm}|l|p{1.8cm}|p{5.05cm}} 
	3.) & 
	$\begin{matrix} &0&1\\
	\text{0:}&1&\text{.}\\
	\text{1:}&\text{.}&4\\
	\text{2:}&\text{.}&4\\
	\text{5:}&\text{.}&4\\\end{matrix}$ 
	 & 
	$\begin{matrix} &0&1\\
	\text{0:}&1&\text{.}\\
	\text{1:}&\text{.}&4\\
	\text{2:}&\text{.}&4\\
	\text{5:}&\text{.}&4\\\end{matrix}$ 
	& 3 & true & 
	$\! \begin{aligned}	
	\left\{0,\,9\right\} &\Rightarrow 1\\
	\left\{1,\,4\right\} & \Rightarrow 1 \\
	& \vdots \\
	\left\{3,\,2\right\} & \Rightarrow 1 \\
	\end{aligned}$ 
	& a numerical Godeaux surface with trivial torsion group and two hyperelliptic fibers	
\end{tabular}
\par \medskip
\begin{remark}
	It is known that the Barlow surfaces have two hyperelliptic bicanonical
	fibers.  Thus the 2-dimensional locus of Barlow lines is a sublocus of our 6-dimensional family.
\end{remark}
\begin{remark}
	Note that in the examples of torsion-free marked numerical Godeaux surfaces presented in this section, the ideal $J$ of the bi-tri-canonical model in $\PP^1 \times \PP^3$ contains a form of bidegree $(7-2h,2)$, where $h$ is the number of hyperelliptic bicanonical fibers. Using a different approach, this computational observation is proven in \cite[Theorem 2.5]{CatanesePignatelli}. 
\end{remark}

\section*{Summary}
Up to now, the question  whether  there are further marked numerical Godeaux surfaces whose bicanonical system on  the 
canonical model has 4 distinct base points stays open. If such surfaces exist, then they have trivial torsion group. The corresponding lines $\ell \subset Q \subset \PP^{11}$ have to intersect some of the special loci of Section \ref{sec_prelim}.  Experimentally over finite fields we investigated surfaces which arise from a general point on one or two of these loci. Several ghost components of such surfaces were found. However, all of the surfaces which we discovered, and which do not lie in the closure of the dominant component, are reducible and mapped to a quadric in $\PP^{3}$. If this is always true then there are no further numerical Godeaux surfaces whose bicanonical system has no fixed part and four distinct base points.

 Whether our approach will lead eventually to a complete classification of (marked) numerical Godeaux surfaces depends on  whether we will be able to exclude the existence of (even more) special lines, which lead to smooth surfaces.
In those cases the number of choices in the Construction second step has to be even larger than in the corresponding case observed so far experimentally in Section \ref{ghost}.

\bigskip

\vbox{\noindent Author Addresses:\par
\smallskip
\noindent{Frank-Olaf Schreyer}\par
\noindent{Mathematik und Informatik, Universit\"at des Saarlandes, Campus E2 4, 
D-66123 Saarbr\"ucken, Germany.}\par
\noindent{schreyer@math.uni-sb.de}\par
\smallskip

\noindent{Isabel Stenger}\par
\noindent{Mathematik und Informatik, Universit\"at des Saarlandes, Campus E2 4, 
D-66123 Saarbr\"ucken, Germany.}\par
\noindent{stenger@math.uni-sb.de}\par

}


\begin{thebibliography}{xxxxx} 

\bibitem[Bar85]{Barlow} R. Barlow, 
A simply connected surface of general type with $p_g= 0$. 
{\em Inventiones mathematicae}, 79:293--301, 1985. 


\bibitem[vB07]{Bothmer} H.-C. Graf von Bothmer,
Scrollar syzygies of general canonical curves with genus $\le 6$.
{\em Transaction of of the AMS}  359, 465--488, 2007.

\bibitem[CFHR99]{CataneseFranciosiHulekReid}
F. Catanese, M. Franciosi, K. Hulek, and M. Reid,
Embeddings of curves and surfaces. 
{\em Nagoya Mathematical Journal}, 154:185--220, 1999.
 
\bibitem[CP00]{CatanesePignatelli}
F. Catanese and R. Pignatelli, On simply connected {G}odeaux surfaces. 
{\em Complex Analysis and Algebraic Geometry}, de Gruyter, Berlin, 117--153, 2000.

  \bibitem[CG94]{CraigheroGattazzo}
P. Craighero and R. Gattazzo, Quintics surfaces in $\PP^3$ having a non-singular model with $q = p_g = 0$, $P_2  \neq 0$. {\em Ren. Sem. Math. Uni. Padova}, 91:187-198,1994.

\bibitem[FPR18]{FranParRoll}
M. Franciosi, R. {Pardini} and S. Rollenske, Gorenstein stable {G}odeaux surfaces.
{\em Selecta Mathematica. New Series}, Springer, 24:4, 3349--3379, 2018. 

\bibitem[GS]{M2} 
D. R. Grayson, M. E. Stillman, Macaulay2, a software system for research in algebraic geometry. Available at \url{http://www.math.uiuc.edu/Macaulay2/}

\bibitem[Miy76]{Miyaoka} Y. Miyaoka, 
Tricanonical {M}aps of {N}umerical {G}odeaux {S}urfaces. 
{\em Inventiones mathematicae}, 34:99--112, 1976. 

\bibitem[Rei72]{ReidGodeaux78} M. Reid,
Surfaces with $p_g= 0$, $K^2= 1$.
{\em Sci. Univ. Tokyo Sect. 1A 25}, 75--92, 1978.

\bibitem[Rei13]{Reid13} M. Reid, 
Parallel unprojection equations for $\ZZ/3$-Godeaux surfaces. Available from 
\url{homepages.warwick. ac. uk/~ masda/codim4/God3. pdf}, 2013.

\bibitem[Sch86]{Schreyer86}  F.-O. Schreyer,
Syzygies of canonical curves and special linear series. {\em Math. Ann.} 275: 105--137, 1986.


\bibitem[SS20]{SS20} F.-O. Schreyer, I. Stenger, 
An 8-dimensional family of simply connected {G}odeaux surfaces,  arXiv.2009.05357, 2020.


\bibitem[SSM2]{SS20M2} F.-O. Schreyer, I. Stenger, 
NumericalGodeaux, \Mac2-package for the construction of numerical Godeaux surfaces. Available at \url{https://www.math.uni-sb.de/ag/schreyer/index.php/computeralgebra}, 2020, 2022.


\bibitem[Ste18]{Stenger18} I. Stenger,
    A Homological Approach to Numerical Godeaux Surfaces, PhD thesis, 2018, Technische Universit{\"a}t Kaiserslautern.

\bibitem[Ste19]{Stenger19} I. Stenger,
   A structure result for {G}orenstein algebras of odd codimension,arXiv.1910.00516, 2019.

\end{thebibliography}
\end{document}